\newtheorem{lemma}{Lemma}[section]
\newtheorem{theorem}{Theorem}[section]
\newenvironment{proof}
   {\begin{trivlist}\item[]\textbf{\bf{Proof. }}\ignorespaces}
   {\qed\end{trivlist}}
\newcommand{\qed}{{\ifmmode q.e.d. \else\unskip\nobreak\hfil
\penalty50\quad\null\nobreak\hfill $\square$ \parfillskip=0pt
\finalhyphendemerits=0\par\fi}}
\begin{document}
\begin{center}
{\bf \Large Three classes of 1-planar graphs}\\[3mm]
{\bf J\'ulius Czap and Peter \v Sugerek}\\
Department of Applied Mathematics and Business Informatics, Faculty of Economics\\ Technical University of Ko\v{s}ice, N\v{e}mcovej 32, 040 01 Ko\v{s}ice, Slovakia\\
email: julius.czap@tuke.sk, peter.sugerek@tuke.sk\\
\end{center}

\bigskip
{\bf Abstract:} A graph is called 1-planar if it can be drawn in the plane so that each of its edges is crossed by at most one other edge. In this paper we decompose the set of all 1-planar graphs into three classes $\mathcal C_0, \mathcal C_1$ and $\mathcal C_2$ with respect to the types of crossings and present the decomposition of 1-planar join products. 

Zhang \cite{z} proved that every $n$-vertex 1-planar graph of class $\mathcal C_1$ has at most $\frac{18}{5}n-\frac{36}{5}$ edges and a $\mathcal C_1$-drawing with at most $\frac 35 n-\frac 65$ crossings. We improve these results. We show that every $\mathcal C_1$-drawing of a 1-planar graph has at most $\frac 35 n-\frac 65$ crossings. Consequently, every $n$-vertex 1-planar graph of class $\mathcal C_1$ has at most $\frac{18}{5}n-\frac{36}{5}$ edges. Moreover, we prove that this bound is sharp.

\bigskip
{\bf Keywords:}  crossing number, join product, 1-planar graph

{\bf 2010 Mathematical Subject Classification:} 	05C10, 05C62

\section{Introduction}
All graphs considered in this paper are finite, simple and undirected, unless otherwise stated. We use $V(G)$ and $E(G)$ to denote the vertex set and the edge set of a graph $G$, respectively. The \emph{crossing number} of $G$, denoted by $cr(G)$, is the minimum possible number of crossings in a drawing of $G$ in the plane.

A drawing of a graph is \emph{1-planar} if each of its edges is crossed at most once. If a graph has a 1-planar drawing, then it is \emph{1-planar}. Let $G$ be a 1-planar graph drawn in the plane so that no of its edges is crossed more than once. The \emph{associated plane graph} $G^\times$ of $G$ is the plane graph obtained from $G$ such that the crossings of $G$ become new vertices of degree four; we call these vertices \emph{false}. Vertices of $G^\times$ which are also vertices of $G$ are called \emph{true}. Similarly, the edges and faces of $G^\times$ are called false, if they are incident with a false vertex, and true otherwise. For a false vertex $c$ let $N_{G^\times}(c)$ denote the set of neighbors of $c$ in $G^\times$. 

It is easy to see that if a graph has a 1-planar drawing in which two edges $e_1, e_2$ with a common endvertex cross, then the drawing of $e_1$ and $e_2$ can be changed so that these two edges no longer cross. Therefore, we may assume that adjacent edges never cross and that no edge is crossing itself. Consequently, every crossing involves two edges with four distinct endvertices, i.e.  $|N_{G^\times}(c)|=4$ for every false vertex $c$.


We say that a 1-planar graph is of class $\mathcal C_0$ if it has such a 1-planar drawing $D$ that for any two false vertices $c_1, c_2$ of $D^\times$ it holds $|N_{D^\times}(c_1)\cap N_{D^\times}(c_2)|=0$. This class of 1-planar graphs was investigated in \cite{ks, zl, zly} under the notion plane graphs with independent crossings.
We say that a 1-planar graph is of class $\mathcal C_i$, $i\in\{1,2\}$, if it is not of class $\mathcal C_k$ for any $k<i$ and it has such a 1-planar drawing $D$ that for any two false vertices $c_1, c_2$ of $D^\times$ it holds $|N_{D^\times}(c_1)\cap N_{D^\times}(c_2)|\le i$.
The corresponding drawing is called \emph{$\mathcal C_i$-drawing}, $i=0,1,2$. 

Note that, the class $\mathcal C_1$ was investigated in \cite{z} under the notion plane graphs with near-independent crossings. 

In this paper we show that every 1-planar graph belongs to one of the classes $\mathcal C_0$, $\mathcal C_1$ and $\mathcal C_2$. After that we deal with the classification of 1-planar joins. The \emph{join product} (or shortly, join) $G+H$ of two graphs $G$ and $H$ is obtained from vertex–disjoint copies of $G$ and $H$ by adding all edges between $V(G)$ and $V(H)$.

The author of \cite{z} proved that any \emph{good $\mathcal C_1$-drawing} (that is, a $\mathcal C_1$-drawing with minimum possible number of crossings) of an $n$-vertex 1-planar graph of class $\mathcal C_1$ has at most $\frac 35 n-\frac 65$ crossings. In this paper we improve this result. We show that this bound holds for any $C_1$-drawing. From this result it follows that any $n$-vertex 1-planar graph of class $\mathcal C_1$ has at most $\frac{18}{5}n-\frac{36}{5}$ edges. We show that this bound is tight.

The disjoint union of two graphs $G_1$ and $G_2$ will be denoted by $G_1 \cup G_2$ and the disjoint union of $k$ isomorphic graphs $G_1$ will be denoted by $kG_1$.

\section{Results}

First we show that every 1-planar graph $G$ has such a 1-planar drawing $D$ that for any two false vertices $c_1, c_2$ of $D^\times$ it holds $|N_{D^\times}(c_1)\cap N_{D^\times}(c_2)|\le 2$.


Assume that there are crossings $c_1,c_2$ in a 1-planar drawing $D$ such that for the corresponding false vertices it holds $|N_{D^\times}(c_1)\cap N_{D^\times}(c_2)|\ge3$.  Let $xy$ and $zw$ be the edges which cross at $c_1$. Since $|N_{D^\times}(c_1)\cap N_{D^\times}(c_2)|\ge3$, without loss of generality, we can assume that the crossing $c_2$ is the interior point of the edge $xz$. In this case we can redraw the edge $xz$ such that it is crossing-free by following the edges that cross at $c_1$ from $x$ and $z$ until they meet in a close neighborhood of $c_1$. Therefore, if $D^\times$ contains such false vertices $c_1,c_2$ that $|N_{D^\times}(c_1)\cap N_{D^\times}(c_2)|\ge3$, then we can eliminate one of them.

\medskip
In the following we deal with the classification of 1-planar joins.  

\begin{lemma}\label{hmco}
Let $W$ be a 1-planar graph of class $\mathcal C_0$. Then any $\mathcal C_0$-drawing of $W$ contains at most $\frac{|V(W)|}{4}$ crossings.
\end{lemma}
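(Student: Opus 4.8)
The plan is to use a counting argument on the associated plane graph $W^\times$ of a $\mathcal C_0$-drawing. In a $\mathcal C_0$-drawing, any two false vertices have no common neighbor, so in particular the four neighbors of a false vertex $c$ are all distinct from the four neighbors of any other false vertex $c'$; hence no true vertex is adjacent (in $W^\times$) to two different false vertices. Equivalently, each true vertex is incident with at most one false vertex. First I would let $t = |V(W)|$ be the number of true vertices and $f$ the number of false vertices (crossings). Each false vertex $c$ has $|N_{W^\times}(c)| = 4$, and — crucially — since adjacent edges never cross, all four neighbors of $c$ are true vertices (a false vertex is never adjacent to another false vertex, because that would require two crossing edges sharing the crossing as a common point on both, which cannot happen for simple edges crossing once).

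The core counting step is then: each false vertex consumes four distinct true neighbors, and by the $\mathcal C_0$-property these neighbor-sets are pairwise disjoint across distinct false vertices. Therefore $\sum_{c \text{ false}} |N_{W^\times}(c)| = 4f$ counts each true vertex at most once, giving $4f \le t = |V(W)|$, i.e. $f \le \tfrac{|V(W)|}{4}$, which is exactly the claim. The key observation to nail down is that $|N_{D^\times}(c_1)\cap N_{D^\times}(c_2)| = 0$ for distinct false vertices really does imply the neighbor-sets are disjoint as sets (immediate from the definition) and that every neighbor of a false vertex is a true vertex (so the disjointness conclusion is about true vertices, which are the vertices of $W$).

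The main obstacle — and it is a mild one — is justifying that a false vertex has no false neighbor in $W^\times$. This follows because an edge of $W^\times$ incident with a false vertex $c$ is "half" of an original edge of $W$ running from $c$ to one of its endpoints; its other endpoint in $W^\times$ is either a true vertex (the endpoint of the original edge) or another crossing on that same original edge — but in a 1-planar drawing each edge is crossed at most once, so there is no second crossing, and the other endpoint is true. Thus all four neighbors of every false vertex are true. Combining this with the disjointness yields the bound directly; no face-counting or Euler-formula argument is needed.
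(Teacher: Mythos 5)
Your proposal is correct and fills in exactly the counting argument that the paper leaves implicit (its proof is the single sentence ``It follows from the definition of $\mathcal C_0$-drawing''): the neighborhoods of distinct false vertices are pairwise disjoint sets of four true vertices, so $4f\le|V(W)|$. Your justification that every neighbor of a false vertex is true (each edge is crossed at most once, so no two false vertices are adjacent) is the right supporting observation, and the paper itself records $|N_{G^\times}(c)|=4$ in the introduction.
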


\begin{proof}
It follows from the definition of $\mathcal C_0$-drawing.
\end{proof}

\begin{lemma}\label{crossing}
Let $W$ be a 1-planar graph of class $\mathcal C_1$. If $W$ has at most 8 vertices, then any $\mathcal C_1$-drawing of $W$ has at most two crossings.
\end{lemma}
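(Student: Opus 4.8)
The plan is a short counting argument, by contradiction, on the endvertices of the crossings. Suppose that some $\mathcal{C}_1$-drawing $D$ of $W$ has at least three crossings, and fix three of them, say $c_1,c_2,c_3$. By the normalization recorded right after the definition of the associated plane graph, every crossing is the intersection of two edges with four distinct endvertices; hence for each $i$ the set $A_i := N_{D^\times}(c_i)$ consists of exactly four true vertices, so that $A_i \subseteq V(W)$ and $|A_i| = 4$. Since $D$ is a $\mathcal{C}_1$-drawing, the definition also gives $|A_i \cap A_j| \le 1$ for all $i \ne j$.

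The next step is to bound $|A_1 \cup A_2 \cup A_3|$ from below. By the Bonferroni inequality (equivalently, inclusion--exclusion together with $|A_1 \cap A_2 \cap A_3| \ge 0$),
$$|A_1 \cup A_2 \cup A_3| \;\ge\; \sum_{i=1}^{3} |A_i| - \sum_{1 \le i < j \le 3} |A_i \cap A_j| \;\ge\; 3 \cdot 4 - 3 \cdot 1 = 9 .$$
But $A_1 \cup A_2 \cup A_3 \subseteq V(W)$, so $|V(W)| \ge 9$, contradicting the hypothesis $|V(W)| \le 8$. Hence $D$ has at most two crossings, which is exactly the claim.

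I do not expect a genuine obstacle here: the whole argument rests only on two facts already on record --- that a crossing has four distinct endvertices, and that two crossings in a $\mathcal{C}_1$-drawing share at most one of them --- and the only mild point of care is to invoke precisely those statements and nothing stronger. It is worth remarking, though, that this elementary packing bound does not scale: applied to $k$ crossings it yields only $|V(W)| \ge 4k - \binom{k}{2}$, far weaker than the linear estimate on the number of crossings announced in the abstract; that sharper bound will have to come from a different, global argument of Euler-formula / discharging type rather than from a refinement of this count.
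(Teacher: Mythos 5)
Your proof is correct and is essentially the paper's own argument: the paper also fixes three crossings and counts endvertices, showing $|N_{D^\times}(c_1)\cup N_{D^\times}(c_2)|\ge 7$ and then that $c_3$ could have at most three neighbours, which is the same packing bound you obtain in one step via Bonferroni. No gap; the two write-ups differ only in presentation.
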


\begin{proof}
Let $c_1,c_2,c_3$ be crossings in a $\mathcal C_1$-drawing $D$ of $W$. Clearly, $|N_{D^\times}(c_1) \cup N_{D^\times}(c_2)|\ge 7$, since $D$ is a $\mathcal C_1$-drawing. Therefore, there is at most one true vertex in $D^\times$ which is not incident neither $c_1$ nor $c_2$.  The false vertex $c_3$ is incident with at most one vertex in $N_{D^\times}(c_1)$ and with at most one vertex in $N_{D^\times}(c_2)$. Consequently, $c_3$ has at most three (true) neighbors, a contradiction.
\end{proof}

\begin{theorem}\cite{kleitman}\label{kleitman}
Let $K_{m,n}$ denote the complete bipartite graph on $m+n$ vertices. Then
$cr(K_{m,n})=\left\lfloor \frac m2 \right\rfloor\left\lfloor\frac{m-1}{2}\right\rfloor \left\lfloor \frac n2\right\rfloor\left\lfloor\frac{n-1}{2}\right\rfloor$
for $\min\{m, n\}\le6$.
\end{theorem}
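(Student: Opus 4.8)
The plan splits into the upper bound $cr(K_{m,n})\le Z(m,n)$, where I write $Z(m,n)=\lfloor m/2\rfloor\lfloor(m-1)/2\rfloor\lfloor n/2\rfloor\lfloor(n-1)/2\rfloor$ and which in fact holds for all $m,n$, and the matching lower bound, which is where the hypothesis $\min\{m,n\}\le 6$ is used. For the upper bound I would exhibit the Zarankiewicz drawing: put $\lceil m/2\rceil$ vertices of one part on the positive $y$-axis and the remaining $\lfloor m/2\rfloor$ on the negative $y$-axis, put $\lceil n/2\rceil$ vertices of the other part on the positive $x$-axis and $\lfloor n/2\rfloor$ on the negative $x$-axis, and draw every edge as a straight segment; a routine count of which pairs of segments meet yields exactly $Z(m,n)$ crossings.

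For the lower bound I would argue by induction, treating $m=1,\dots,6$ in turn and, for each $m$, inducting on $n$ (base cases $n<m$ being covered by the already established identity $cr(K_{n,m})=Z(n,m)$); the cases $m\le 2$ are trivial since $K_{1,n}$ and $K_{2,n}$ are planar and $Z(1,n)=Z(2,n)=0$. Fix an optimal drawing $D$ of $K_{m,n}$ with parts $A$ (with $|A|=m$) and $B=\{b_1,\dots,b_n\}$; since adjacent edges may be assumed not to cross and no two edges cross twice, every crossing is between two edges $a_pb_i$, $a_qb_j$ with $p\ne q$ and $i\ne j$, so if $c_{ij}$ denotes the number of crossings whose two edges end at $b_i$ and $b_j$, then $cr(D)=\sum_{i<j}c_{ij}$. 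Deleting $b_k$ yields a drawing of $K_{m,n-1}$, so $cr(D)-\sum_{j\ne k}c_{jk}\ge cr(K_{m,n-1})=Z(m,n-1)$ by the inductive hypothesis; summing over $k$ and using $\sum_k\sum_{j\ne k}c_{jk}=2\,cr(D)$ gives the deletion bound $cr(D)\ge\frac{n}{n-2}Z(m,n-1)$. A one-line computation shows the right side equals $Z(m,n)$ when $n$ is even, which settles those cases.

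When $n$ is odd the deletion bound falls short of $Z(m,n)$ by a bounded additive amount, and closing this gap is the real content. For $m=3,4$ it is classical (Zarankiewicz, Urbaník): one feeds the parity fact that any drawing of $K_{3,3}$ with no two edges crossing twice has an odd number of crossings into the count over the $\binom n3$ induced copies of $K_{3,3}$, and combines this with the deletion bound. For $m=5,6$ this is exactly Kleitman's theorem, and I would follow his route: first prove a parity lemma pinning down the residues modulo $2$ of the $c_{ij}$ and of the crossing numbers of the induced $K_{m,3}$ and $K_{m,5}$ subdrawings, using the known small values $cr(K_{5,3})=4$, $cr(K_{5,5})=16$, $cr(K_{6,3})=6$, $cr(K_{6,5})=24$ and $cr(K_{6,6})=36$; then show that no integer profile $(c_{ij})$ compatible with both these congruences and the deletion inequality can have sum below $Z(m,n)$.

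The step I expect to be the main obstacle is precisely this last one for odd $n$ and $m\in\{5,6\}$: the deletion bound alone leaves a stubborn gap of a few crossings, and one must combine the parity lemma with a genuinely combinatorial extremal analysis of the admissible crossing profiles to eliminate it. This is also the point at which the method does not extend past $m=6$, since for larger $m$ the small-case crossing numbers feeding the parity input are not known. Everything else is the bookkeeping of the double induction and the straightforward verification that the Zarankiewicz drawing is optimal.
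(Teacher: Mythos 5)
The paper does not actually prove this statement: Theorem \ref{kleitman} is quoted from Kleitman \cite{kleitman} and used as a black box (to extract $cr(K_{4,4})=4$, $cr(K_{5,3})=4$ and $cr(K_{4,3})=2$), so there is no in-paper argument to compare yours against. Judged on its own terms, your outline correctly reproduces the architecture of the known proof: the Zarankiewicz drawing for the upper bound, the vertex-deletion count giving $cr(D)\ge\frac{n}{n-2}Z(m,n-1)$, and the observation that this closes the even-$n$ induction step exactly while leaving an additive gap for odd $n$. Those parts are sound (a small refinement: for $m=3$ the gap is $\frac{n-1}{2(n-2)}<1$ for odd $n\ge5$, so integrality alone finishes once $cr(K_{3,3})\ge1$ is known, and for $m=4$ one averages over the four induced copies of $K_{3,n}$ rather than invoking parity).

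As a proof, however, the proposal stops exactly where the theorem becomes nontrivial. For $m\in\{5,6\}$ and odd $n$ you say you would ``follow Kleitman's route'': establish a parity lemma pinning down the residues of the $c_{ij}$ and of the induced $K_{m,3}$ and $K_{m,5}$ subdrawings, and then eliminate every admissible crossing profile with sum below $Z(m,n)$. Neither the precise statement of that parity lemma nor the extremal case analysis is supplied, and these two steps are essentially the entire content of the cited result; without them the argument only yields $cr(K_{m,n})\ge Z(m,n)-O(1)$ for odd $n$. So what you have is an accurate annotated table of contents for Kleitman's proof rather than a proof. If the theorem is to be treated as imported from the literature --- as this paper treats it --- that is perfectly adequate; if you intend a self-contained argument, the parity lemma and the analysis for $m=5,6$, odd $n$, must be written out in full.
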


\begin{lemma}\label{44}
If $|V(G)|\ge|V(H)|\ge 4$ and $G+H$ is 1-planar, then $G+H$ is of class $\mathcal C_2$.
\end{lemma}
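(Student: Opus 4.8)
The plan is to show that if $|V(G)| \ge |V(H)| \ge 4$ and $G+H$ is 1-planar, then in fact $|V(G)| = |V(H)| = 4$, and then to exhibit (or argue the existence of) a 1-planar drawing of $G+H$ in which some pair of crossings shares three neighbors, while showing no $\mathcal C_1$-drawing exists. The first reduction should come from the crossing number of $K_{4,4}$: by Theorem~\ref{kleitman}, $cr(K_{4,4}) = 4$, and more generally $cr(K_{m,4}) = 2\lfloor m/2\rfloor\lfloor (m-1)/2\rfloor$ grows quadratically in $m$. Since $G+H$ contains $K_{|V(G)|,|V(H)|}$ as a subgraph, and a 1-planar graph on $N$ vertices has at most $4N-8$ edges (hence a bounded crossing number — at most $N-2$ crossings in any 1-planar drawing), one gets a linear upper bound on $cr(G+H)$ in terms of $|V(G)|+|V(H)|$. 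Comparing the quadratic lower bound $cr(K_{|V(G)|,|V(H)|})$ with this linear bound forces $|V(H)| = 4$ and then pins $|V(G)|$ down to a small range; a little more care (using that $G+H$ also contains the edges within $G$ and within $H$) should force $|V(G)| = 4$ as well, i.e. $G+H$ has exactly $8$ vertices.

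**Key steps in order.** First I would record that $G+H \supseteq K_{4,4}$ and that $K_{4,4}$ itself is 1-planar, so the hypothesis is non-vacuous. Second, I would establish the linear upper bound: any 1-planar graph on $N$ vertices has at most $4N-8$ edges, and in any 1-planar drawing the number of crossings is at most the number of ``extra'' edges beyond planarity, giving $cr \le N - 2$ or a similar linear bound. Third, combine with $cr(G+H) \ge cr(K_{|V(G)|,4})$ from Theorem~\ref{kleitman} (valid since $\min = 4 \le 6$), which is $2\lfloor |V(G)|/2\rfloor \lfloor (|V(G)|-1)/2 \rfloor$; since this is quadratic in $|V(G)|$ while the upper bound is linear in $|V(G)|+4$, only finitely many values of $|V(G)|$ survive, and checking them rules out everything but $|V(G)| = 4$. (If $|V(H)| \ge 5$, then $G+H \supseteq K_{5,5}$ with $cr(K_{5,5}) = 16$ and more vertices, and the same inequality is even more strongly violated — so $|V(H)| = 4$ immediately, and then the $K_{|V(G)|,4}$ argument handles $G$.) Fourth, with $|V(G)| = |V(H)| = 4$ established, I would show $G+H$ is of class $\mathcal C_2$: by Lemma~\ref{crossing}, any $\mathcal C_1$-drawing of an $8$-vertex 1-planar graph of class $\mathcal C_1$ has at most two crossings, but $G+H \supseteq K_{4,4}$ needs at least $4$ crossings in any drawing, so $G+H$ is not of class $\mathcal C_1$; it is trivially not of class $\mathcal C_0$ either (a $\mathcal C_0$-drawing on $8$ vertices has at most $2$ crossings by Lemma~\ref{hmco}, again below $4$). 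Finally, since we proved earlier that every 1-planar graph is of class $\mathcal C_0$, $\mathcal C_1$, or $\mathcal C_2$, and $G+H$ is 1-planar, it must be of class $\mathcal C_2$.

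**Main obstacle.** The delicate part is the second/third step: getting a clean enough linear bound on the number of crossings in a 1-planar drawing and then squeezing it against the quadratic lower bound to eliminate all but $|V(G)| = 4$. One must be careful that $cr(G+H)$ — the abstract crossing number — lower-bounds the crossing count of every 1-planar drawing, and that the $4N-8$ edge bound for 1-planar graphs translates into the crossing-count bound via Euler's formula applied to $G^\times$. The inequality $2\lfloor m/2\rfloor\lfloor (m-1)/2\rfloor \le (m+4) - 2$ (or whatever the exact linear bound turns out to be) fails already for $m = 5$ or $m=6$, so only $m = 4$ remains; I would double-check the small cases $m \in \{4,5,6\}$ by hand since the constants matter. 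The rest of the argument — combining Lemmas~\ref{hmco} and \ref{crossing} with $cr(K_{4,4}) = 4$ and the trichotomy of 1-planar graphs — is routine.
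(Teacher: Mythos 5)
Your overall target is right, but there is a genuine gap in the reduction you build your argument on, and that reduction is in fact unnecessary. The paper's proof never forces $|V(G)|=|V(H)|=4$: it applies Lemmas~\ref{hmco} and \ref{crossing} to the \emph{subgraph} $K_{4,4}$, which has exactly $8$ vertices, concludes from $cr(K_{4,4})=4$ that $K_{4,4}$ admits neither a $\mathcal C_0$- nor a $\mathcal C_1$-drawing (both would allow at most two crossings on $8$ vertices), and then observes that a $1$-planar supergraph of a class-$\mathcal C_2$ graph is itself of class $\mathcal C_2$ --- because restricting a $\mathcal C_0$- or $\mathcal C_1$-drawing of $G+H$ to the copy of $K_{4,4}$ would yield a $\mathcal C_0$- or $\mathcal C_1$-drawing of $K_{4,4}$. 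You instead insist on applying Lemma~\ref{crossing} to $G+H$ itself, which is why you need the vertex count pinned to $8$; this is a long detour around a one-line observation.

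The gap is in how you pin the vertex count down. Your justification of the linear crossing bound is backwards: removing one edge per crossing shows that a $1$-planar drawing with $c$ crossings satisfies $|E|-c\le 3N-6$, i.e.\ $c\ge |E|-3N+6$, which is a \emph{lower} bound on $c$, not an upper bound. The easy upper bound is $c\le |E|/2\le (4N-8)/2=2N-4$, and with that constant your squeeze fails: for $|V(G)|=5$, $|V(H)|=4$ you get $cr(K_{5,4})=8\le 14=2\cdot 9-4$, and similarly $m=6,7$ survive, so you do not reach $|V(G)|=4$. The sharper bound $c\le N-2$ that you would need is true, but it requires a separate argument (e.g.\ the false edges of $D^\times$ form a simple bipartite plane graph on $N+c$ vertices with $4c$ edges, whence $4c\le 2(N+c)-4$), which you neither state nor prove, and which is not what your "extra edges beyond planarity" heuristic delivers. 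You explicitly flag that the constants matter and that you would check $m\in\{4,5,6\}$ by hand --- they do matter, and as written the argument does not close. (Your final step, from $8$ vertices and $cr\ge 4$ to class $\mathcal C_2$ via Lemmas~\ref{hmco}, \ref{crossing} and the trichotomy, is fine; also note that being of class $\mathcal C_2$ does not require exhibiting two crossings sharing three neighbours, as your opening sentence suggests.)
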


\begin{proof}
If $|V(G)|\ge|V(H)|\ge 4$, then $G+H$ contains $K_{4,4}$ as a subgraph. The graph $K_{4,4}$ is 1-planar, see \cite{ch}. From Theorem \ref{kleitman} we have $cr(K_{4,4})=4$, therefore any 1-planar drawing of $K_{4,4}$ contains at least four crossings. Hence, Lemmas \ref{hmco} and \ref{crossing} imply that the graph  $K_{4,4}$ is of class $\mathcal C_2$ . The fact that $G+H$ contains a subgraph of class $\mathcal C_2$ implies that $G+H$ also belongs to $\mathcal C_2$.
\end{proof}

\begin{lemma}\label{53}
If $|V(G)|\ge 5$, $|V(H)|\ge 3$ and $G+H$ is 1-planar, then $G+H$ is of class $\mathcal C_2$.
\end{lemma}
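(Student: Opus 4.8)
The plan is to reduce this to the same strategy used in Lemma \ref{44}: exhibit a subgraph of $G+H$ that is necessarily of class $\mathcal C_2$ in every $1$-planar drawing, and then invoke the fact that if a graph contains a subgraph of class $\mathcal C_2$ (equivalently, a subgraph that cannot be $\mathcal C_0$- or $\mathcal C_1$-drawn), then the whole graph is of class $\mathcal C_2$. Since $|V(G)|\ge 5$ and $|V(H)|\ge 3$, the join $G+H$ contains $K_{5,3}$ as a subgraph. So it suffices to show that $K_{5,3}$ is $1$-planar but is neither of class $\mathcal C_0$ nor of class $\mathcal C_1$.

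First I would record that $K_{5,3}$ is $1$-planar: it is a subgraph of $K_{6,6}$ minus a perfect matching, or more directly one can just display an explicit $1$-planar drawing, and in any case $K_{5,3}\subseteq K_{4,4}\cup\{\text{extra vertex}\}$-type arguments are unnecessary since a direct drawing is easy. Next, by Theorem \ref{kleitman}, $cr(K_{5,3})=\lfloor 5/2\rfloor\lfloor 4/2\rfloor\lfloor 3/2\rfloor\lfloor 2/2\rfloor = 2\cdot 2\cdot 1\cdot 1 = 4$. Hence every $1$-planar drawing of $K_{5,3}$ has at least four crossings. Now $K_{5,3}$ has $8$ vertices, so Lemma \ref{hmco} shows that a $\mathcal C_0$-drawing would have at most $8/4 = 2$ crossings, and Lemma \ref{crossing} shows that a $\mathcal C_1$-drawing of a graph on at most $8$ vertices has at most $2$ crossings. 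Since $4 > 2$, $K_{5,3}$ admits neither a $\mathcal C_0$-drawing nor a $\mathcal C_1$-drawing, so it is of class $\mathcal C_2$. Because $G+H$ contains $K_{5,3}$ as a subgraph and is $1$-planar by hypothesis, it follows that $G+H$ is of class $\mathcal C_2$ as well.

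The only genuine point requiring care — and the step I expect to be the main obstacle — is confirming that $K_{5,3}$ really is $1$-planar, since the clean subgraph argument collapses if it is not; but this is standard (for instance $K_{5,3}$ embeds in the known $1$-planar graph $K_{6,6}-M$, or one checks a small explicit drawing), so the argument goes through. Everything else is an immediate combination of Theorem \ref{kleitman} with Lemmas \ref{hmco} and \ref{crossing}, exactly parallel to the proof of Lemma \ref{44}.
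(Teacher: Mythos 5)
Your proposal is correct and follows the paper's proof essentially verbatim: $G+H\supseteq K_{5,3}$, which is $1$-planar (the paper cites \cite{ch}), has crossing number $4$ by Theorem \ref{kleitman}, and hence by Lemmas \ref{hmco} and \ref{crossing} cannot be $\mathcal C_0$- or $\mathcal C_1$-drawn, so it and its supergraph $G+H$ are of class $\mathcal C_2$. One small caveat on your aside: $K_{5,3}$ is in fact \emph{not} a subgraph of $K_{6,6}$ minus a perfect matching (any three vertices on one side have only three common non-neighbours' complements, i.e.\ only $K_{3,3}$ survives), but this does not matter since an explicit $1$-planar drawing of $K_{5,3}$ exists and is what the paper relies on.
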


\begin{proof}
In this case $G+H$ contains $K_{5,3}$ as a subgraph. The graph $K_{5,3}$ is 1-planar, see~\cite{ch}.
The crossing number of $K_{5,3}$ is four (see Theorem \ref{kleitman}), hence (by Lemmas \ref{hmco} and \ref{crossing}) it is of class $\mathcal C_2$.
Consequently, the supergraph $G+H$ of $K_{5,3}$ is also of class $\mathcal C_2$.
\end{proof}

From Lemmas \ref{44} and \ref{53} we obtain, that there are only three possibly cases for joins of classes $\mathcal C_0$ and $\mathcal C_1$, namely:
\begin{itemize}
\item $|V(G)|=|V(H)|=3$.
\item $|V(G)|=4$ and $|V(H)|=3$.
\item $|V(G)|\ge5$ and $|V(H)|\le 2$.
\end{itemize}

\subsection{The first case: $|V(G)|=|V(H)|=3$}

If the graphs $G$ and $H$ have together (at most) six vertices, then $G+H$ is always 1-planar, since it is a subgraph of the complete graph on six vertices $K_6$ which is 1-planar, see \cite{ch}. 

\begin{lemma}\label{c0c2}
If $W$ is a 1-planar graph on at most six vertices, then $W$ is either of class $\mathcal C_0$ or $\mathcal C_2$.
\end{lemma}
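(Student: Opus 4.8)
The plan is to show that a 1-planar graph $W$ on at most six vertices cannot be of class $\mathcal C_1$; since every 1-planar graph lies in $\mathcal C_0$, $\mathcal C_1$, or $\mathcal C_2$ (as shown at the start of Section 2), this forces $W$ into $\mathcal C_0$ or $\mathcal C_2$. So I would argue by contradiction: suppose $W$ is of class $\mathcal C_1$, and fix a $\mathcal C_1$-drawing $D$ of $W$. By the definition of the classes, $W$ is not of class $\mathcal C_0$, so $D$ has at least one crossing whose false vertex shares a neighbor with some other false vertex; in particular $D$ must contain at least two crossings $c_1, c_2$ with $|N_{D^\times}(c_1)\cap N_{D^\times}(c_2)|=1$ (if every pair of false vertices had disjoint neighborhoods, $D$ would be a $\mathcal C_0$-drawing, contradicting $W\notin\mathcal C_0$).

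Now count true vertices. Each false vertex has four true neighbors (recall adjacent edges never cross, so every crossing involves four distinct endvertices). Two false vertices $c_1, c_2$ with exactly one common neighbor satisfy $|N_{D^\times}(c_1)\cup N_{D^\times}(c_2)| = 4+4-1 = 7$, so $D^\times$ — hence $W$ — would have at least seven true vertices. This contradicts $|V(W)|\le 6$. Therefore no $\mathcal C_1$-drawing of $W$ can exist, i.e. $W$ is not of class $\mathcal C_1$, and the lemma follows.

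The only subtle point is the claim that a 1-planar graph on at most six vertices which fails to be of class $\mathcal C_0$ must actually admit two false vertices sharing a neighbor in a $given$ $\mathcal C_1$-drawing — but this is immediate from the definitions: a $\mathcal C_1$-drawing $D$ with all pairs of false vertices having empty common neighborhoods is by definition a $\mathcal C_0$-drawing, so if $W\notin\mathcal C_0$ then any $\mathcal C_1$-drawing of $W$ has such a pair $c_1,c_2$ with $|N_{D^\times}(c_1)\cap N_{D^\times}(c_2)| = 1$. I expect no real obstacle here; the argument is essentially the same vertex-counting idea used in Lemma \ref{crossing}, and the main thing to get right is simply the bookkeeping $4+4-1=7>6$.
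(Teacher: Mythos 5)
Your proof is correct and rests on the same counting idea as the paper's: two false vertices each have four distinct true neighbours, so with at most six true vertices available any two crossings must share at least $4+4-6=2$ neighbours (equivalently, your $4+4-1=7>6$ contradiction). The paper phrases this directly — any drawing with at least two crossings is automatically not a $\mathcal C_0$- or $\mathcal C_1$-drawing — rather than by contradiction, but the argument is essentially identical.
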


\begin{proof}
If $W$ has a 1-planar drawing with at most one crossing, then it is of class $\mathcal C_0$. If any 1-planar drawing $D$ of $W$ has at least two crossings, say $c_1,c_2$, then it is of class $\mathcal C_2$, since $|N_{D^\times}(c_1)\cap N_{D^\times}(c_2)|>1$. 
\end{proof}

Let $C_n$ and $P_n$ denote the cycle and the path on $n$ vertices, respectively.

\begin{lemma}
The graphs $C_3+P_2\cup P_1$ and $P_3+P_3$ are of class $\mathcal C_0$.
\end{lemma}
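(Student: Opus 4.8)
The plan is to exhibit, for each of the two graphs, a $1$-planar drawing with exactly one crossing. Any drawing with at most one crossing has at most one false vertex, so the condition defining a $\mathcal C_0$-drawing is satisfied vacuously; hence producing such a drawing proves membership in $\mathcal C_0$ (this is precisely the reasoning in the proof of Lemma \ref{c0c2}). Note that each of these graphs has $13$ edges on $6$ vertices and is therefore non-planar, so one crossing is in fact best possible.

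For $P_3+P_3$, write the two paths as $a_1a_2a_3$ and $b_1b_2b_3$. First I would check that $(P_3+P_3)-a_2b_2$ is the octahedron: its three non-edges $a_1a_3$, $b_1b_3$, $a_2b_2$ form a perfect matching, so this graph is $K_6$ minus a perfect matching, i.e. $K_{2,2,2}$. I would then use the planar drawing of the octahedron in which the $4$-cycle $a_1b_1a_3b_3$ bounds a square, $a_2$ sits inside it joined to all four corners, and $b_2$ sits outside it joined to all four corners. Here the triangles $a_2a_1b_1$ and $b_2a_1b_1$ are faces sharing the edge $a_1b_1$, so the missing edge $a_2b_2$ can be added from $a_2$ across $a_1b_1$ to $b_2$; this produces a $1$-planar drawing of $P_3+P_3$ whose unique crossing is between $a_2b_2$ and $a_1b_1$.

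For $C_3+(P_2\cup P_1)$, let $a_1a_2a_3$ be the triangle, $b_1b_2$ the edge of $P_2$, and $b_3$ the vertex of $P_1$, so that the edges are the triangle $a_1a_2a_3$, the edge $b_1b_2$, and all nine edges $a_ib_j$. I would build a drawing by nesting: draw the triangle $a_1a_2a_3$; place $b_3$ inside it joined to $a_1,a_2,a_3$; place $b_2$ outside it joined to $a_1,a_2,a_3$; and finally place $b_1$ inside the triangular face $b_2a_2a_3$ joined to $b_2$, $a_2$ and $a_3$. A quick count shows this draws every edge except $a_1b_1$ without crossings. Since $a_1$ lies on the face $b_2a_1a_2$ and $b_1$ lies on the face $b_1b_2a_2$, and these two faces share the edge $a_2b_2$, the edge $a_1b_1$ can be added from $a_1$ across $a_2b_2$ to $b_1$, giving a $1$-planar drawing with a single crossing.

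The whole argument reduces to two elementary verifications — that $(P_3+P_3)-a_2b_2\cong K_{2,2,2}$, and that the nested construction for $C_3+(P_2\cup P_1)$ leaves exactly the edge $a_1b_1$ undrawn — both of which are straightforward edge counts, so I do not expect a real obstacle; the only thing to watch is keeping track of which small triangles are faces of the nested drawings so that the final edge can indeed be routed across a single common edge.
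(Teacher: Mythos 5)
Your proposal is correct and takes essentially the same approach as the paper, which simply exhibits $\mathcal C_0$-drawings of the two graphs in a figure; you replace the figure by explicit constructions (each with a single crossing, which vacuously satisfies the $\mathcal C_0$ condition), and both constructions check out.
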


\begin{proof}
$\mathcal C_0$-drawings of the graphs $C_3+P_2\cup P_1$ and $P_3+P_3$ are shown in Figure \ref{figurey}.
\end{proof}

\begin{figure}
\centerline{
\begin{tabular}{ccc}
\includegraphics{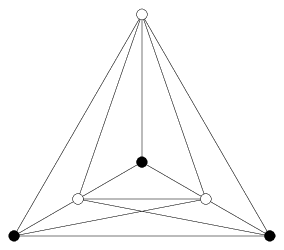}&&
\includegraphics{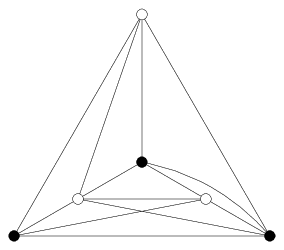}
\\$C_3+P_2\cup P_1$&&$P_3+P_3$
\end{tabular}
}
\caption{$\mathcal C_0$-drawings of the graphs $C_3+P_2\cup P_1$ and $P_3+P_3$.}
\label{figurey}
\end{figure}

%
%
%

The crossing number of join products of cycles and paths were studied in \cite{klesc}.

\begin{theorem}\cite{klesc}\label{klescpc}
$cr(C_n+P_m)=\left\lfloor \frac m2 \right\rfloor\left\lfloor\frac{m-1}{2}\right\rfloor \left\lfloor \frac n2\right\rfloor\left\lfloor\frac{n-1}{2}\right\rfloor+1$ for $m\ge2,n\ge3$ with $\min\{m, n\}\le6$.
\end{theorem}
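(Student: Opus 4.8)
The plan is to prove $cr(C_n+P_m)\le Z+1$ and $cr(C_n+P_m)\ge Z+1$ separately, where I set $Z:=\lfloor \tfrac m2\rfloor\lfloor\tfrac{m-1}2\rfloor\lfloor \tfrac n2\rfloor\lfloor\tfrac{n-1}2\rfloor$, so that Theorem~\ref{kleitman} reads $cr(K_{m,n})=Z$ under the hypothesis $\min\{m,n\}\le 6$. For the upper bound I would start from the Zarankiewicz straight-line drawing of the spanning subgraph $K_{m,n}$: put the $m$ vertices of the $P_m$-part on the $y$-axis, split as evenly as possible above and below the origin, the $n$ vertices of the $C_n$-part on the $x$-axis, split as evenly as possible on the two sides of the origin, and join them by segments. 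This drawing has exactly $Z$ crossings and a small disk around the origin contains no edge. Add the edges of a Hamiltonian path of $C_n$ along the $x$-axis and then the last edge of $C_n$ through the outer face; none of these meets an edge of $K_{m,n}$, so there are still $Z$ crossings, and now the drawn $C_n$ is a closed curve separating the two groups of $P_m$-vertices. Finally add the $m-1$ edges of $P_m$ along the $y$-axis: the edges joining consecutive vertices within one group create no crossing, while the single edge joining the two groups, routed through the edge-free disk around the origin, crosses $C_n$ exactly once and nothing else. This yields a drawing with $Z+1$ crossings.

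For the lower bound, $K_{m,n}\subseteq C_n+P_m$ together with Theorem~\ref{kleitman} gives $cr(C_n+P_m)\ge Z$, so it remains to exclude a drawing with exactly $Z$ crossings. Suppose $D$ is such a drawing, assumed good. Then the induced drawing of $K_{m,n}$ has at least $Z$, hence exactly $Z$, crossings, and no crossing of $D$ involves an edge of $C_n$ or of $P_m$. Hence $C_n$ is drawn as a simple closed curve $\gamma$; since $P_m$ is connected, shares no vertex with $C_n$, and avoids $\gamma$, all $m$ vertices of the $P_m$-part lie on one side of $\gamma$, and working on the sphere I let $\Delta$ be the closed disk bounded by $\gamma$ that contains them. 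Since every edge of $K_{m,n}$ meets $\gamma$ only at its $C_n$-endpoint, the whole drawing of $K_{m,n}$ lies in $\Delta$, with the $n$ vertices of the $C_n$-part on $\partial\Delta$ in the cyclic order induced by $C_n$; so $D$ restricted to $K_{m,n}$ is a disk drawing with at most $Z$ crossings.

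It then remains to show that any disk drawing of $K_{m,n}$ with $n\ge 3$, $m\ge 2$ has more than $Z$ crossings, which is the desired contradiction. For two vertices $t_i,t_j$ of the $P_m$-part, the star $T^i$ at $t_i$ is crossing-free and cuts $\Delta$ into $n$ faces, each carrying exactly two of the boundary vertices; $t_j$ lies in one of them, so each of the remaining $n-2$ boundary vertices $v$ forces $t_jv$ to leave that face across $T^i$, and bounding the number of crossed $T^i$-edges by the number one must cross to reach $v$ (a combinatorial distance around $\Delta$) shows that $T^i$ and $T^j$ cross at least $\lfloor \tfrac n2\rfloor\lfloor\tfrac{n-1}2\rfloor$ times. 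Every crossing of the disk drawing is charged to a unique pair of stars, because edges sharing an endpoint do not cross in a good drawing, so summing over all $\binom m2$ pairs gives at least $\binom m2\lfloor \tfrac n2\rfloor\lfloor\tfrac{n-1}2\rfloor$ crossings; this exceeds $Z$ since $\binom m2>\lfloor \tfrac m2\rfloor\lfloor\tfrac{m-1}2\rfloor$ for every $m\ge 2$ while $\lfloor \tfrac n2\rfloor\lfloor\tfrac{n-1}2\rfloor\ge 1$ for $n\ge 3$. This contradiction proves $cr(C_n+P_m)\ge Z+1$, and with the upper bound $cr(C_n+P_m)=Z+1$. The only delicate point I foresee is this last estimate: establishing carefully that $T^i$ and $T^j$ must cross at least $\lfloor \tfrac n2\rfloor\lfloor\tfrac{n-1}2\rfloor$ times in a disk drawing and checking that the charging of crossings to pairs is exact; everything else rests on the cited equality $cr(K_{m,n})=Z$ and on the routine augmentation of the Zarankiewicz drawing.
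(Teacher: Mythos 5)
The paper does not prove this statement at all: Theorem~\ref{klescpc} is imported verbatim from Kle\v{s}\v{c}'s paper \cite{klesc} and used as a black box (only the instance $cr(C_3+P_3)=2$ is actually needed, in Lemma~\ref{c3p3}). So there is no in-paper proof to match; what you have written is a self-contained argument, and after checking it I believe it is correct. Your upper bound (Zarankiewicz drawing of $K_{m,n}$, cycle edges along the $x$-axis plus one arc in the outer face, path edges along the $y$-axis with the single inter-group edge crossing $C_n$ once near the origin) is the standard construction and does give $Z+1$ crossings. Your lower bound is where the real content lies, and it holds up: in a good drawing with only $Z$ crossings the $K_{m,n}$-subdrawing is already optimal, so $C_n$ is an uncrossed simple closed curve and the connected $P_m$-side, together with all of $K_{m,n}$, is trapped in one closed disk bounded by it; the star $T^i$ is crossing-free (adjacent edges do not cross in a good drawing), the wheel formed by $T^i$ and the boundary circle has the unique embedding with $n$ triangular cells, and summing the cyclic-distance lower bounds $\min(k,n-k)$ over the $n$ boundary vertices indeed gives $\lfloor\frac n2\rfloor\lfloor\frac{n-1}2\rfloor$ forced crossings between $T^i$ and $T^j$ (check: $n=3$ gives $0+0+1=1$, $n=5$ gives $0+0+1+1+2=4$); since $\binom m2>\lfloor\frac m2\rfloor\lfloor\frac{m-1}2\rfloor$ for all $m\ge2$ and $\lfloor\frac n2\rfloor\lfloor\frac{n-1}2\rfloor\ge1$ for $n\ge3$, the total exceeds $Z$, the desired contradiction. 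The hypothesis $\min\{m,n\}\le6$ is used exactly where it must be, namely in invoking Theorem~\ref{kleitman} for the value $cr(K_{m,n})=Z$. This is essentially Kle\v{s}\v{c}'s own method (force the cycle to be clean, then count crossings between pairs of stars inside the disk), so you have in effect reconstructed the cited proof rather than replaced the paper's, which consists of a citation only.
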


\begin{lemma}\label{c3p3}
The graph $C_3+P_3$ is of class $\mathcal C_2$.
\end{lemma}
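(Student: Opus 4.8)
The plan is to argue that $C_3+P_3$ cannot have a $\mathcal C_0$-drawing nor a $\mathcal C_1$-drawing, so that by the trichotomy established earlier (every 1-planar graph lies in $\mathcal C_0$, $\mathcal C_1$ or $\mathcal C_2$) it must be of class $\mathcal C_2$. The graph $C_3+P_3$ has $6$ vertices and $3+2+9=14$ edges, and by Theorem \ref{klescpc} we have $cr(C_3+P_3)=\lfloor 1\rfloor\lfloor 1\rfloor\lfloor 1\rfloor\lfloor 1\rfloor+1=2$; in fact $C_3+P_3=K_6$ minus one edge, and one checks $cr(K_6-e)=2$. So every 1-planar drawing has at least two crossings, which by Lemma \ref{c0c2} (a 1-planar graph on at most six vertices is of class $\mathcal C_0$ or $\mathcal C_2$) already forces class $\mathcal C_2$ — unless one worries that no 1-planar drawing of $C_3+P_3$ exists at all, but $C_3+P_3\subseteq K_6$ and $K_6$ is 1-planar by \cite{ch}, and the relevant drawing of $K_6$ has exactly $3$ crossings, so deleting an edge leaves a valid 1-planar drawing.

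First I would record that $C_3+P_3$ is 1-planar (as a subgraph of the 1-planar $K_6$) and that $cr(C_3+P_3)=2$ (from Theorem \ref{klescpc}, or directly). Next I would invoke Lemma \ref{c0c2}: since every 1-planar drawing of this $6$-vertex graph has at least two crossings, the first alternative of that lemma (a drawing with at most one crossing) is impossible, so $C_3+P_3$ is of class $\mathcal C_2$. This is essentially all that is needed; the exhibited drawing can be given explicitly by taking the standard $3$-crossing 1-planar drawing of $K_6$ and deleting one edge together with the crossing on it, yielding a drawing with two crossings that share three vertices (so it is neither a $\mathcal C_0$- nor a $\mathcal C_1$-drawing).

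The main obstacle, such as it is, is purely bookkeeping: confirming the crossing number equals $2$ and checking that $C_3+P_3$ does not accidentally have a $\mathcal C_1$-drawing with two crossings sharing at most one vertex. But Lemma \ref{c0c2} already dispenses with the latter concern — on at most six vertices, two crossings in any 1-planar drawing are automatically forced to share more than one neighbor (there are only $6$ true vertices to distribute among the $8$ endpoints counted with the two crossings, forcing an overlap of size at least $2$). Hence the argument reduces to: $C_3+P_3$ is 1-planar, every 1-planar drawing has $\ge 2$ crossings, apply Lemma \ref{c0c2}. I would also cross-check against Lemma \ref{crossing}, which says a $\mathcal C_1$-graph on at most $8$ vertices has at most two crossings in any $\mathcal C_1$-drawing; combined with $cr=2$ this does not immediately exclude $\mathcal C_1$, so the decisive step really is Lemma \ref{c0c2} and its counting of shared neighbors.
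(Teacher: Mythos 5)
Your proposal follows essentially the same route as the paper: establish 1-planarity via the subgraph relation $C_3+P_3\subseteq K_6$, obtain $cr(C_3+P_3)=2$ from Theorem \ref{klescpc}, and then conclude class $\mathcal C_2$ from Lemma \ref{c0c2} (with Lemma \ref{hmco} ruling out $\mathcal C_0$). The argument is correct and the extra cross-checks you mention are not needed.
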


\begin{proof}
The join $C_3+P_3$ is 1-planar, since it is a subgraph of $K_6$, which is 1-planar. From Theorem \ref{klescpc} it follows $cr(C_3+P_3)=2$. Hence, Lemmas \ref{hmco} and \ref{c0c2} imply that $C_3+P_3$ is of class $\mathcal C_2$.
\end{proof}

\subsection{The second case: $|V(G)|=4$ and $|V(H)|=3$}

%

\begin{lemma}\label{c1c2}
If $|V(G)|=4$ and $|V(H)|=3$, then the graph $G+H$ cannot be of class $\mathcal C_0$.
\end{lemma}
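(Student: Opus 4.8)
The goal is to show that any join $G+H$ with $|V(G)|=4$ and $|V(H)|=3$ cannot be of class $\mathcal C_0$, i.e., it admits no $1$-planar drawing in which every pair of false vertices shares no common neighbour. The natural strategy is to reduce to a \emph{densest} such join and count crossings. The densest $1$-planar join on these vertex counts is $K_4+K_3=K_7$ minus nothing relevant; more usefully, $G+H$ always contains $K_{4,3}$ as a subgraph. Since being of class $\mathcal C_0$ is inherited by subgraphs (any subdrawing of a $\mathcal C_0$-drawing is again a $\mathcal C_0$-drawing — deleting edges only removes crossings, hence only removes pairs of false vertices and their common neighbours), it suffices to prove that $K_{4,3}$ itself is not of class $\mathcal C_0$, provided $K_{4,3}$ is $1$-planar; by \cite{ch} it is, being a subgraph of $K_{5,3}$ (or directly). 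So I would first reduce the statement to: \emph{$K_{4,3}$ is not of class $\mathcal C_0$.}

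Next I would invoke the crossing-number machinery already set up. By Theorem \ref{kleitman}, $cr(K_{4,3})=\lfloor 4/2\rfloor\lfloor 3/2\rfloor\lfloor 3/2\rfloor\lfloor 2/2\rfloor=2\cdot 1\cdot 1\cdot 1=2$, so every $1$-planar drawing of $K_{4,3}$ has at least two crossings. On the other hand, by Lemma \ref{hmco}, any $\mathcal C_0$-drawing of a graph $W$ has at most $|V(W)|/4$ crossings; for $W=K_{4,3}$ this gives at most $7/4$, i.e., at most one crossing. These two facts contradict each other, so $K_{4,3}$ has no $\mathcal C_0$-drawing. Pushing this back up through the subgraph inheritance, $G+H$ cannot be of class $\mathcal C_0$ either.

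The only genuine gap to fill carefully is confirming that $K_{4,3}$ (equivalently $G+H$ restricted to the bipartite edges) is $1$-planar, so that the crossing-number bound $cr=2$ is actually attained by a $1$-planar drawing and Lemma \ref{hmco} applies — but this is immediate since $K_{4,3}\subseteq K_{5,3}$ and $K_{5,3}$ is $1$-planar by \cite{ch}, and a subgraph of a $1$-planar graph is $1$-planar. I expect the subgraph-inheritance remark for class $\mathcal C_0$ to be the one subtle point worth stating explicitly: if $D$ is a $\mathcal C_0$-drawing of $G+H$ then restricting $D$ to any subgraph yields a drawing whose set of crossings is a subset of the original, and the condition $|N_{D^\times}(c_1)\cap N_{D^\times}(c_2)|=0$ is preserved under taking subdrawings, so the subgraph would also be of class $\mathcal C_0$ — contradicting the previous paragraph. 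No step requires a lengthy computation; the whole argument is a short combination of Theorem \ref{kleitman}, Lemma \ref{hmco}, and the $1$-planarity of $K_{5,3}$ from \cite{ch}.
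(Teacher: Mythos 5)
Your proposal is correct and follows essentially the same route as the paper: both use $cr(K_{4,3})=2$ from Theorem \ref{kleitman} together with the bound of Lemma \ref{hmco} (at most $\lfloor 7/4\rfloor =1$ crossing on $7$ vertices) to get a contradiction. The paper applies Lemma \ref{hmco} directly to $G+H$ rather than passing through the subgraph-inheritance of class $\mathcal C_0$, which makes your detour (and the worry about $1$-planarity of $K_{4,3}$) unnecessary, but harmless.
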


\begin{proof}
The graph $G+H$ contains $K_{4,3}$ as a subgraph whose crossing number is two (by Theorem~\ref{kleitman}). This means that any drawing of $G+H$ contains at least two crossings. Therefore, $G+H$ cannot be of class $\mathcal C_0$ (see Lemma \ref{hmco}).
\end{proof}

\begin{lemma}
The graphs $P_4+P_3$ and $2P_2+C_3$ are of class $\mathcal C_1$.
\end{lemma}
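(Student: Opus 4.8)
The plan has three ingredients. First, the ``not of class $\mathcal{C}_0$'' part is immediate: both $P_4+P_3$ and $2P_2+C_3$ have the form $G+H$ with $|V(G)|=4$ and $|V(H)|=3$, so Lemma~\ref{c1c2} already says that neither of them is of class $\mathcal{C}_0$. Hence the whole task reduces to producing, for each of the two graphs, one $\mathcal{C}_1$-drawing, i.e.\ a 1-planar drawing $D$ in which any two false vertices of $D^\times$ have at most one common neighbour.

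Before drawing anything I would pin down what such a drawing must look like, which simplifies both the construction and its verification. Each of the two graphs has $7$ vertices and contains $K_{4,3}$, with parts $A$ (the vertices of $G$) and $B$ (the vertices of $H$), and $cr(K_{4,3})=2$ by Theorem~\ref{kleitman}. By Lemma~\ref{crossing} a $\mathcal{C}_1$-drawing on at most $8$ vertices has at most two crossings, while restricting such a drawing to $K_{4,3}$ already produces at least two crossings, each between two edges of $K_{4,3}$; hence a $\mathcal{C}_1$-drawing $D$ has exactly two crossings $c_1,c_2$, and each $c_i$ is the crossing of two $K_{4,3}$-edges, so it has two neighbours in $A$ and two in $B$. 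If the two $A$-neighbours of $c_1$ and the two of $c_2$ were not all distinct, then $c_1$ and $c_2$ would share an $A$-vertex, and, since any two $2$-subsets of the $3$-element set $B$ intersect, they would share a further ($B$-)vertex, contradicting $|N_{D^\times}(c_1)\cap N_{D^\times}(c_2)|\le 1$. Thus the four $A$-neighbours are distinct (they exhaust $A$) and the $B$-neighbours of $c_1$ and those of $c_2$ meet in exactly one vertex; up to relabelling, $c_1=a_1b_1\times a_2b_2$ and $c_2=a_3b_2\times a_4b_3$, with $b_2$ the only shared neighbour. In particular, any 1-planar drawing of $G+H$ in which the only crossings are the pair $\{a_1b_1,a_2b_2\}$ and the pair $\{a_3b_2,a_4b_3\}$ is automatically a $\mathcal{C}_1$-drawing.

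So the concrete task is: draw $P_4+P_3$ (resp.\ $2P_2+C_3$) realizing exactly these two crossings and no others. I would start from the subgraph obtained from $K_{4,3}$ by deleting $a_1b_1,a_2b_2,a_3b_2,a_4b_3$; this has only $8$ edges on $7$ vertices, hence cannot contain a subdivision of $K_5$ or $K_{3,3}$ (each of which has at least $9$ edges), so it is planar, and a quick check shows it stays planar after adding the few edges required inside $A$ and inside $B$ (the path edges, the two matching edges, or the triangle edges). In a suitable planar drawing of that graph I would then reinsert $a_1b_1,a_2b_2,a_3b_2,a_4b_3$ through appropriate faces so that $a_1b_1$ meets $a_2b_2$ and $a_3b_2$ meets $a_4b_3$ and nothing else is crossed, and record the two resulting drawings in a figure; from them one reads off that every edge is crossed at most once and that $|N_{D^\times}(c_1)\cap N_{D^\times}(c_2)|=1$, which, with the first paragraph, finishes the proof. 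The main obstacle is precisely this last routing step: the counting above shows nothing obstructs it, but actually placing the twelve join edges together with the handful of edges inside the parts so that exactly the two prescribed crossings survive is a short but delicate case check, best conveyed by the picture rather than in prose.
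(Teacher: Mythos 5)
Your proposal is correct and follows essentially the same route as the paper: Lemma~\ref{c1c2} rules out class $\mathcal C_0$, and the $\mathcal C_1$ membership is established by exhibiting explicit drawings (the paper's Figure~\ref{figurex} plays the role of the figure you defer to). Your preliminary analysis pinning down that any $\mathcal C_1$-drawing must have exactly two crossings of the form $a_1b_1\times a_2b_2$ and $a_3b_2\times a_4b_3$ is a useful addition that makes verifying the figure easier, but it does not change the substance of the argument.
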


\begin{proof}
From Lemma \ref{c1c2} it follows that these graphs cannot be of class $\mathcal C_0$. $\mathcal C_1$-drawings of the graphs $P_4+P_3$ and $2P_2+C_3$ are shown in Figure \ref{figurex}.
\end{proof}

\begin{figure}
\centerline{
\begin{tabular}{ccc}
\includegraphics[angle=90]{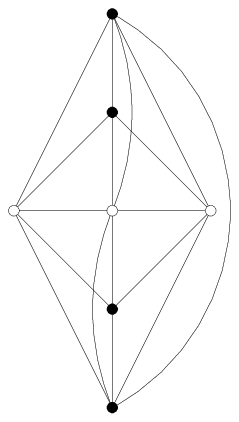}&&
\includegraphics{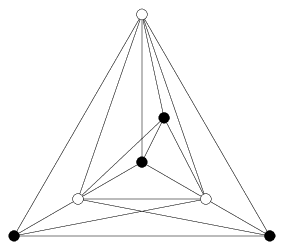}
\\$P_4+P_3$&&$2P_2+C_3$
\end{tabular}
}
\caption{$\mathcal C_1$-drawings of the graphs $P_4+P_3$ and $2P_2+C_3$.}
\label{figurex}
\end{figure}

\begin{lemma}\label{3vertex}
Let  $|V(G)|=4$ and $|V(H)|=3$. If $G+H$ is 1-planar and $G$ contains a vertex of degree three, then $G+H$ is of class $\mathcal C_2$.
\end{lemma}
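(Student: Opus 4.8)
The approach is the one used for Lemmas \ref{44}, \ref{53} and \ref{c3p3}: exhibit a subgraph of $G+H$ that is of class $\mathcal C_2$. Since $|V(G)|=4$ and $G$ has a vertex of degree three, $G$ contains a spanning star $K_{1,3}$, and hence $G+H$ contains $K_{1,3}+\overline{K_3}$, which is exactly the complete tripartite graph $K_{1,3,3}$. So it suffices to show that $K_{1,3,3}$ is of class $\mathcal C_2$. It is 1-planar, being a subgraph of the 1-planar graph $G+H$, and it has only $7\le 8$ vertices. By Lemma \ref{c1c2} (applied to $K_{1,3}+\overline{K_3}$, where $|V(K_{1,3})|=4$ and $|V(\overline{K_3})|=3$) it is not of class $\mathcal C_0$. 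Assuming for the moment that $cr(K_{1,3,3})\ge 3$, Lemma \ref{crossing} shows it is not of class $\mathcal C_1$ either --- a $\mathcal C_1$-drawing of a graph of class $\mathcal C_1$ on at most $8$ vertices has at most two crossings, while every drawing of $K_{1,3,3}$ would have at least three --- and, being 1-planar, it then admits a $\mathcal C_2$-drawing, so it is of class $\mathcal C_2$. As in the proof of Lemma \ref{44}, the presence of the $\mathcal C_2$-subgraph $K_{1,3,3}$ together with the 1-planarity of $G+H$ forces $G+H$ to be of class $\mathcal C_2$.

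The crux is thus the inequality $cr(K_{1,3,3})\ge 3$, which is the step I expect to be the main obstacle, as it is not covered by Theorems \ref{kleitman} and \ref{klescpc}. I would argue as follows. Take any drawing of $K_{1,3,3}$ and delete the vertex $w$ forming the singleton part, together with its six incident edges; the faces incident to $w$ merge into one face, and since $w$ was adjacent to all six vertices of the remaining $K_{3,3}$, all six of them lie on the boundary of that face. Hence we obtain a drawing of $K_{3,3}$ in which all six vertices lie on a common face; making that face the outer one and pushing the vertices onto a circle yields a convex drawing of $K_{3,3}$ (all vertices on a circle, all edges inside the disk) with at most as many crossings as the original drawing.

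It remains to check that every convex drawing of $K_{3,3}$ has at least three crossings. In such a drawing two edges cross whenever their endpoints interleave along the circle, so the number of crossings is at least the number of interleaving pairs of edges. For the complete bipartite graph $K_{3,3}$ an interleaving pair corresponds exactly to a choice of two vertices in each part whose four positions, read around the circle, have the two vertices of one part consecutive and the two of the other consecutive (rather than alternating by part). Going through the three cyclic arrangements of three $p$-vertices and three $q$-vertices, namely $pppqqq$, $ppqpqq$ and $pqpqpq$ (up to rotation and reflection), one counts $9$, $5$ and $3$ such pairs, so there are always at least $3$. This gives $cr(K_{1,3,3})\ge 3$ and finishes the proof. (The value $cr(K_{1,3,3})=3$ is in fact known and could be quoted instead; alternatively one can bypass $K_{1,3,3}$ and argue directly: a $\mathcal C_1$-drawing of $G+H$ would, by Lemma \ref{crossing}, have at most two crossings, and deleting the degree-three vertex $v$ of $G$ --- all six of whose neighbours then lie on a common face --- would restrict it to a convex drawing of $K_{3,3}$ with at most two crossings, a contradiction.)
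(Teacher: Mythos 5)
Your overall strategy is exactly the paper's: $G+H$ contains $K_{1,3}+3P_1=K_{3,3,1}$, and since $cr(K_{3,3,1})=3$ while Lemma \ref{crossing} caps any $\mathcal C_1$-drawing of a graph on at most $8$ vertices at two crossings, $K_{3,3,1}$ (and hence its 1-planar supergraph $G+H$) must be of class $\mathcal C_2$. The paper simply cites Asano for $cr(K_{3,3,1})=3$, which is the option you mention parenthetically; had you taken it, your proof would be complete and essentially identical to the paper's.

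The self-contained argument you give for $cr(K_{1,3,3})\ge 3$ has a genuine gap. After deleting $w$ and its six incident edges, it is \emph{not} true in general that all six neighbours of $w$ lie on the boundary of a common face of the remaining drawing of $K_{3,3}$. The merged face containing the point $w$ extends along each deleted arc $wv_i$ only up to the first crossing of that arc with an edge of the $K_{3,3}$; if $wv_i$ is crossed, $v_i$ can end up on the boundary of a different face. Your own computation shows this must happen for some drawings: a drawing of $K_{3,3}$ with one crossing cannot have all six vertices on one face (that would force at least three crossings by your interleaving count), yet adding $w$ to such a drawing and joining it to all six vertices produces a drawing of $K_{1,3,3}$ for which your deletion step returns exactly that one-crossing drawing. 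What your argument fails to count are the crossings on the star edges themselves; a correct accounting must charge, for each $v_i$, at least the dual distance from the face containing $w$ to $v_i$ and then bound $cr(D')$ plus this sum over all drawings $D'$ of $K_{3,3}$ and all choices of face --- which is essentially Asano's proof and needs more case analysis than the convex computation. The same unproven ``common face'' claim also undermines the direct variant in your closing parenthesis. The remaining ingredients (the reduction to $K_{1,3,3}$, the use of Lemmas \ref{c1c2} and \ref{crossing}, the interleaving count of $9$, $5$, $3$ for the three cyclic arrangements, and the passage from the $\mathcal C_2$ subgraph to $G+H$) are all correct.
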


\begin{proof}
In this case the graph $G$ contains $K_{3,1}$ as a subgraph. Hence, $G+H$ contains $K_{3,3,1}$ as a subgraph. The crossing number of $K_{3,3,1}$ is 3, see \cite{asano}. Therefore, from Lemma \ref{crossing}  it follows that  $K_{3,3,1}$ does not have a $\mathcal C_1$-drawing. Consequently, if its supergraph $G+H$ is 1-planar, then it must be of class $\mathcal C_2$.
\end{proof}

\begin{lemma}
The graph $C_4+3P_1$ is of class $\mathcal C_2$.
\end{lemma}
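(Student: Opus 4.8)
The plan is to identify $C_4+3P_1$ with a complete tripartite graph, kill the class $\mathcal C_0$ cheaply with a crossing‑number bound, and then exclude a $\mathcal C_1$‑drawing by a short structural analysis of optimal drawings.

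\emph{Identification and the easy parts.} Since $C_4\cong K_{2,2}$ and $3P_1$ is the empty graph on three vertices, $C_4+3P_1$ is the complete tripartite graph $K_{2,2,3}$; I would write $X=\{x_1,x_2\}$, $Y=\{y_1,y_2\}$ for the two parts coming from $C_4$ and $Z=\{z_1,z_2,z_3\}$ for the part coming from $3P_1$. First I would check that $K_{2,2,3}$ is $1$‑planar: draw the octahedron $K_{2,2,2}$ on $X\cup Y\cup\{z_1,z_2\}$ in the plane and insert $z_3$ into one of its triangular faces, routing two of the four edges at $z_3$ across two boundary edges of that face; this produces a drawing with exactly two crossings. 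On the other hand $K_{2,2,3}$ contains $K_{4,3}$ (the part $X\cup Y$ against the part $Z$), so by Theorem~\ref{kleitman} every drawing has at least $cr(K_{4,3})=2$ crossings, and hence $cr(K_{2,2,3})=2$. As $|V(K_{2,2,3})|=7$, Lemma~\ref{hmco} shows a $\mathcal C_0$‑drawing could have at most $\lfloor 7/4\rfloor=1$ crossing, so $K_{2,2,3}\notin\mathcal C_0$. By the classification of $1$‑planar graphs into $\mathcal C_0,\mathcal C_1,\mathcal C_2$ obtained at the start of this section, it then suffices to prove that $K_{2,2,3}$ has no $\mathcal C_1$‑drawing.

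\emph{Locating the shared vertex.} For the remaining case I would argue by contradiction: suppose $D$ is a $\mathcal C_1$‑drawing of $K_{2,2,3}$. By Lemma~\ref{crossing} it has at most two crossings, and since $cr(K_{2,2,3})=2$ it has exactly two, at false vertices $c_1,c_2$ with $|N_{D^\times}(c_1)\cap N_{D^\times}(c_2)|\le 1$. Because $|N_{D^\times}(c_i)|=4$ and $|N_{D^\times}(c_1)\cup N_{D^\times}(c_2)|\le 7$, this intersection consists of a single vertex $v$, the union is all of $V(K_{2,2,3})$, and $v$ is an endpoint of exactly one crossed edge at $c_1$ and of exactly one crossed edge at $c_2$. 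Deleting these two edges from $D$ removes both crossings, so $K_{2,2,3}$ minus two suitable edges incident with $v$ is planar. If $v\in X$ (or $Y$), the deletion leaves the subgraph induced on $V(K_{2,2,3})\setminus\{v\}$ untouched, and that subgraph is $K_{1,2,3}\supseteq K_{3,3}$, which is not planar --- a contradiction. Hence $v\in Z$, say $v=z_1$. The two crossed edges at $z_1$ join it to $X\cup Y$; if they were $z_1x_1,z_1x_2$ (or $z_1y_1,z_1y_2$), then after their deletion $z_1$ would have to be inserted, into the essentially unique embedding of the $3$‑connected octahedron on $V(K_{2,2,3})\setminus\{z_1\}$, inside a face containing two vertices of one part --- impossible, since every triangular face of the octahedron has one vertex in each part. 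So the two crossed edges at $z_1$ are $z_1x_i$ and $z_1y_j$; relabel so that they are $z_1x_1$ and $z_1y_1$.

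\emph{The contradiction.} Then I would pin down the remaining crossed edges. Let $e_1$ be the edge crossed by $z_1x_1$ at $c_1$ and $e_2$ the edge crossed by $z_1y_1$ at $c_2$. Since all seven vertices are covered by the crossed edges and $z_1$ is the only repetition, $e_1$ and $e_2$ are disjoint with $e_1\cup e_2=\{x_2,y_2,z_2,z_3\}$; thus $\{e_1,e_2\}$ is a perfect matching of this $4$‑set, so it uses neither the non‑edge $z_2z_3$ nor $x_2y_2$ (whose removal would leave the non‑edge $z_2z_3$). Delete $z_1x_1,z_1y_1$ from $D$ to obtain a planar drawing of $H:=K_{2,2,3}-\{z_1x_1,z_1y_1\}$, which is the octahedron $\Omega$ on $V(K_{2,2,3})\setminus\{z_1\}$ together with the degree‑$2$ vertex $z_1$ joined to $x_2$ and $y_2$. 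The embedding of $\Omega$ is rigid, its faces are triangles with one vertex in each part, and $x_2y_2\in E(\Omega)$, so $z_1$ lies inside a face $\{x_2,y_2,z_j\}$ of $\Omega$ for some $j\in\{2,3\}$; consequently every face of $H$ incident with $z_1$ is bounded by edges from $\{x_2y_2,\,x_2z_1,\,z_1y_2,\,x_2z_j,\,y_2z_j\}$. When $z_1x_1$ is reinserted it leaves $z_1$ and makes its single crossing before leaving the face where it started, so $e_1$ belongs to that set; being disjoint from $\{z_1,x_1\}$ and different from $x_2y_2$, we get $e_1\in\{x_2z_j,y_2z_j\}$. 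Reinserting $z_1y_1$ as well, the faces at $z_1$ are still bounded only by edges of $\{x_2y_2,x_2z_1,z_1y_2,x_2z_j,y_2z_j\}$ and a sub‑arc of $z_1x_1$, so the same reasoning gives $e_2\in\{x_2z_j,y_2z_j\}$. But $x_2z_j$ and $y_2z_j$ share the vertex $z_j$, contradicting the disjointness of $e_1$ and $e_2$. Hence $K_{2,2,3}$ has no $\mathcal C_1$‑drawing, and therefore $C_4+3P_1=K_{2,2,3}\in\mathcal C_2$.

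\emph{Main obstacle.} I expect the last paragraph to be the delicate one: one has to observe that deleting the two crossed edges at the shared vertex leaves exactly "the octahedron plus a forced degree‑$2$ vertex", and then carry out the face‑incidence bookkeeping that constrains which edge an arc leaving $z_1$ may cross. The two preliminary reductions --- $K_{3,3}\subseteq K_{1,2,3}$, and the fact that two vertices of the same part never lie on a common face of the octahedron --- are routine once one commits to arguing through planar subgraphs and the uniqueness of the octahedron's embedding.
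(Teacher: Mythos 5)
Your proof is correct, but it takes a genuinely different route from the paper's. You both dispose of $\mathcal C_0$ via $cr(K_{4,3})=2$ and Lemma~\ref{hmco}, and both use Lemma~\ref{crossing} to force exactly two crossings in a putative $\mathcal C_1$-drawing; after that the arguments diverge. The paper colours the four edges of $C_4$ red and observes that the two crossings must already occur among the black $K_{4,3}$-edges, so the red cycle is crossing-free; it then normalises the drawing so that exactly one vertex of $3P_1$ lies outside the red cycle, notes that all crossings lie inside, deletes the outside vertex, and lands on a six-vertex $\mathcal C_1$-drawing with two crossings, which contradicts Lemma~\ref{c0c2}. You instead exploit the count $|N_{D^\times}(c_1)\cup N_{D^\times}(c_2)|\ge 7$ on seven vertices to pin down the unique shared vertex $v$, rule out $v\in X\cup Y$ via $K_{3,3}\subseteq K_{1,2,3}$, and then use Whitney rigidity of the octahedron $K_{2,2,2}$ to track which faces the two crossed edges at $v=z_1$ can leave through, contradicting the disjointness of the two partner edges. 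Both arguments are sound; the paper's is shorter and needs no appeal to uniqueness of embeddings, while yours is more self-contained in that it does not depend on the external 1-planarity reference \cite{chm} (you construct the two-crossing drawing of $K_{2,2,3}$ explicitly) and identifies precisely where a $\mathcal C_1$-drawing breaks down. One remark: your final paragraph can be shortened --- once you know $e_1\in\{x_2z_j,y_2z_j\}$, the arc of $z_1x_1$ after its single crossing lies in a triangular face of the octahedron whose closure does not contain $x_1$, so it can never reach $x_1$; the analysis of $e_2$ is not needed.
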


\begin{proof}
The join $C_4+3P_1$ is 1-planar, see \cite{chm}. From Lemma \ref{c1c2} it follows that $C_4+3P_1$ cannot be of class $\mathcal C_0$. Assume that it is of class $\mathcal C_1$. Color the edges of $C_4$ with red and the other edges of $C_4+3P_1$ with black (the edges which join vertices of $C_4$ and $3P_1$). Any drawing of $C_4+3P_1$ has at least two crossings which are incident with only black edges, since the black edges induces $K_{4,3}$. Therefore, any $\mathcal C_1$-drawing of $C_4+3P_1$ has exactly two crossings (see Lemma \ref{crossing}). This means that in any $\mathcal C_1$-drawing of $C_4+3P_1$ no red edge is crossed. The red cycle divides the plane into two parts. If all vertices of $3P_1$ belong to the same part, then we remove one of them, after that we insert the removed vertex to the other part and we join it with the vertices of $C_4$. Clearly, we again obtain a $\mathcal C_1$-drawing of $C_4+3P_1$. So we can assume that the inner part of $C_4$ contains exactly two vertices of $3P_1$. Consequently, all crossings are inside the red  $C_4$, since the black edges which are outside the red $C_4$ are incident with a common vertex and no red edge is crossed. Therefore, if we remove the vertex which lies outside the red $C_4$ we obtain a $\mathcal C_1$-drawing of a graph on six vertices (with two crossings), a contradiction (see Lemma \ref{c0c2}).
\end{proof}

\begin{lemma}
The graph $C_3\cup P_1+3P_1$ is of class $\mathcal C_2$.
\end{lemma}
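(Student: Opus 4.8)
The plan is to mimic the argument used just above for $C_4+3P_1$. Write $W=C_3\cup P_1+3P_1$, let $\{a,b,c\}$ be the vertex set of the triangle $C_3$, let $d$ be the isolated vertex of $C_3\cup P_1$, and let $\{x,y,z\}$ be the vertex set of $3P_1$. First I would note that $W$ is 1-planar (this can be taken from \cite{chm}, or exhibited by an explicit drawing), so that $W$ belongs to one of $\mathcal C_0,\mathcal C_1,\mathcal C_2$, and by Lemma~\ref{c1c2} it is not of class $\mathcal C_0$. Suppose, for contradiction, that $W$ is of class $\mathcal C_1$ and fix a $\mathcal C_1$-drawing $D$. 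Colour the three edges of the triangle $abc$ red and the twelve edges joining $\{a,b,c,d\}$ to $\{x,y,z\}$ black; the black edges induce a copy of $K_{4,3}$. Since $cr(K_{4,3})=2$ by Theorem~\ref{kleitman}, the restriction of $D$ to the black edges already has at least two crossings, while $D$ itself has at most two crossings by Lemma~\ref{crossing} (as $|V(W)|=7\le 8$). Hence $D$ has exactly two crossings $c_1,c_2$, both of them between two black edges; in particular no red edge is crossed in $D$.

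Next I would pin down the combinatorics of the two crossings. Because $W$ is not of class $\mathcal C_0$, the (only) pair $c_1,c_2$ must satisfy $|N_{D^\times}(c_1)\cap N_{D^\times}(c_2)|=1$ (the value $0$ would make $D$ a $\mathcal C_0$-drawing). Let $s$ be the common neighbour; then $c_1$ and $c_2$ are together incident with $4+4-1=7$ distinct true vertices, i.e.\ with all of $V(W)$, and for $i=1,2$ exactly one of the two edges crossing at $c_i$ is incident with $s$. The key step is to prove that $s\in\{x,y,z\}$: every crossed edge is black, hence has exactly one endpoint in $\{x,y,z\}$, so if $s$ lay in $\{a,b,c,d\}$ the two crossed edges incident with $s$ would contribute two distinct vertices of $\{x,y,z\}$ and the two remaining crossed edges would contribute two more, forcing four distinct vertices into the three-element set $\{x,y,z\}$ — impossible. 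So, after relabelling, $s=x$.

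The contradiction is then immediate. Each of $c_1,c_2$ involves an edge incident with $x$ (the one through $c_i$ incident with $s=x$), so deleting $x$ from $D$ destroys both crossings and leaves a crossing-free drawing of $W-x$; thus $W-x$ is planar. But $W-x=(C_3\cup P_1)+2P_1$ consists of the triangle $abc$, the vertex $d$, and the vertices $y,z$, together with all edges from $\{a,b,c,d\}$ to $\{y,z\}$, and contracting the edge $dy$ identifies $d$ with $y$ and yields $K_5$ on $\{a,b,c,y,z\}$; hence $W-x$ has a $K_5$-minor and is not planar, a contradiction. Therefore $W$ is not of class $\mathcal C_1$, and, being 1-planar and not $\mathcal C_0$, it is of class $\mathcal C_2$. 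The only genuinely non-routine point is the key step that locates $s$ on the $3P_1$-side; once that is in hand the deletion-plus-$K_5$-minor argument closes the proof, and the count of crossings together with the fact that no red edge is crossed is forced exactly as in the $C_4+3P_1$ case.
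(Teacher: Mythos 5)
Your proof is correct, but it follows a genuinely different route from the paper's. The two arguments share the same setup: both establish (via Lemma~\ref{c1c2} and Lemmas~\ref{crossing}, \ref{kleitman}) that a hypothetical $\mathcal C_1$-drawing $D$ has exactly two crossings, both between edges of the induced $K_{4,3}$. From there the paper works on the associated plane graph $D^\times$: it counts $9$ vertices and $19$ edges against the $21$ of a triangulation, deduces a face of size $5$ or two faces of size $4$, uses the $3$-connectivity result of \cite{fm} to insert two diagonals between true vertices, and then reaches a contradiction through Lemma~\ref{3vertex} (if a new edge lands inside $C_3\cup P_1$) or Lemma~\ref{c3p3} (if both land inside $3P_1$). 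You instead exploit the structure of the crossings directly: since $D$ is not a $\mathcal C_0$-drawing the two false vertices share exactly one true neighbour $s$, your counting of endpoints of black edges correctly forces $s$ into the $3P_1$ side (four distinct vertices of $\{x,y,z\}$ would otherwise be needed), and deleting $s$ kills both crossings and leaves a planar drawing of $(C_3\cup P_1)+2P_1$, which is impossible because contracting $dy$ produces $K_5$. Your argument is more self-contained --- it avoids the Euler-formula face analysis, the appeal to \cite{fm}, and Lemmas~\ref{3vertex} and \ref{c3p3} --- at the cost of a somewhat more delicate case analysis locating $s$; the paper's method is heavier but recycles machinery already built for the neighbouring lemmas. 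All the individual steps you use (the edge through each $c_i$ incident with $s$ being distinct by $1$-planarity, the disjointness of the remaining neighbourhoods, and the $K_5$ minor) check out.
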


\begin{proof}
The join $C_3\cup P_1+3P_1$ is 1-planar, see \cite{chm}. From Lemma \ref{c1c2} it follows that $C_3\cup P_1+3P_1$ cannot be of class $\mathcal C_0$. Assume that it is of class $\mathcal C_1$. Then any $\mathcal C_1$-drawing $D$ of $C_3\cup P_1+3P_1$ has exactly two crossings. Therefore, the associated plane graph $D^\times$ has 9 vertices and 19 edges. Any plane triangulation on 9 vertices has 21 edges. This implies that $D^\times$ has either a face of size 5 or two faces of size 4. If $D^\times$ has a face $f$ of size 5, then on the boundary of $f$ there are at least 3 true vertices (since false vertices cannot be adjacent). We claim that we can add two diagonals $e_1$, $e_2$ to $f$ which join only true vertices. This is not possible if and only if at least one of these edges is already in $C_3\cup P_1+3P_1$. Assume that $e_1$ is in $C_3\cup P_1+3P_1$. If it is crossed by an other edge, then by shifting $e_1$ to the inner part of $f$ we can decrease the number of crossings to one, which is not possible. If $e_1$ is not crossed, then its endvertices form a 2-vertex-cut in $D^\times$. In \cite{fm} it was proved, that the associated plane graph of a 3-connected 1-planar graph is also 3-connected. Since $C_3\cup P_1+3P_1$ is 3-connected (it contains a 3-connected induced subgraph $K_{4,3}$), it cannot contain a 2-vertex-cut.

If $D$ contains two faces of size 4, then we can proceed similarly as above.

Consequently, we can add two edges to $D^\times$ which join only true vertices. If at least one of these two edges, say $e_1$, joins two vertices of $C_3 \cup P_1$, then we obtain a $\mathcal C_1$-drawing of $G+3P_1$, where $G$ is a graph $C_3 \cup P_1$ with the edge $e_1$. Since $G$ contains a vertex of degree 3, Lemma \ref{3vertex} implies that $G+3P_1$ does not belong to the class $\mathcal C_1$, a contradiction. Therefore, the two edges $e_1,e_2$ must join vertices of $3P_1$. In this case we obtain a $\mathcal C_1$-drawing of $C_3\cup P_1+P_3$, what is impossible, since its subgraph $C_3+P_3$ is of class $\mathcal C_2$, see Lemma \ref{c3p3}.
\end{proof}

\begin{lemma}
The graph $P_3 \cup P_1+C_3$ is of class $\mathcal C_2$.
\end{lemma}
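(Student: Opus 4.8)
The plan is to deduce this from Lemma \ref{c3p3} by exactly the containment argument already used in Lemmas \ref{44} and \ref{53}. The key structural observation is that deleting from $P_3\cup P_1+C_3$ the single vertex contributed by the $P_1$ summand leaves precisely the join of the path on the three $P_3$-vertices with the triangle, that is, the graph $C_3+P_3$. Hence $P_3\cup P_1+C_3$ contains $C_3+P_3$ as a subgraph. I would first record that $P_3\cup P_1+C_3$ is 1-planar: following the pattern of the earlier joins $C_4+3P_1$ and $C_3\cup P_1+3P_1$ this should be citable from \cite{chm}, and in any case it can be seen directly by starting from the nested-triangles 1-planar drawing of $K_6$ (three crossings, the outer triangle playing the role of $C_3$ and bounding the unbounded face), deleting the one inner-triangle edge that turns the inner triangle into $P_3$, and then placing the $P_1$-vertex inside the empty triangular outer face and joining it to the three vertices of $C_3$ without creating any new crossing.

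Next I would invoke Lemma \ref{c3p3}: the subgraph $C_3+P_3$ is of class $\mathcal C_2$, so it has neither a $\mathcal C_0$- nor a $\mathcal C_1$-drawing. Suppose $P_3\cup P_1+C_3$ had a $\mathcal C_i$-drawing $D$ with $i\le 1$. Erasing the $P_1$-vertex together with its incident edges from $D$ produces a 1-planar drawing of $C_3+P_3$ whose false vertices form a subset of those of $D^\times$; for each surviving false vertex both crossing edges avoid the erased vertex, so all four of its neighbours survive and the intersection numbers $|N_{D^\times}(c_1)\cap N_{D^\times}(c_2)|$ are unchanged. Thus we would obtain a $\mathcal C_i$-drawing of $C_3+P_3$ with $i\le 1$, contradicting Lemma \ref{c3p3}. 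Since $P_3\cup P_1+C_3$ is 1-planar, it must therefore be of class $\mathcal C_2$.

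The only genuinely new ingredient is the 1-planarity of $P_3\cup P_1+C_3$; once that is secured (by citing \cite{chm} or by the explicit drawing above), the remainder is the routine ``a 1-planar graph containing a $\mathcal C_2$-subgraph is itself of class $\mathcal C_2$'' step that recurs throughout this section. So I expect the 1-planar drawing to be the only mild obstacle, with no estimates or case analysis needed.
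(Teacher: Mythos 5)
Your proposal is correct and matches the paper's approach: the paper's entire proof is ``It follows from Lemma \ref{c3p3},'' i.e.\ exactly the observation that deleting the $P_1$-vertex leaves the $\mathcal C_2$-subgraph $C_3+P_3$, so the 1-planar supergraph is also of class $\mathcal C_2$. You merely spell out the details (1-planarity and the vertex-deletion step) that the paper leaves implicit.
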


\begin{proof}
It follows from Lemma \ref{c3p3}.
\end{proof}

\subsection{The last case: $|V(G)|\ge5$ and $|V(H)|\le2$}

Note that the graphs $nP_1+2P_1=K_{n,2}$ and $nP_1+P_1=K_{n,1}$ are planar, hence they belong to $\mathcal C_0$. Therefore, if the graph $H$ has at most two vertices, then there exist graphs $G$ with arbitrarily many vertices such that the join $G+H$ is 1-planar. Hence, it is not possible to describe the classes  $\mathcal C_0$, $\mathcal C_1$ and $\mathcal C_2$ without additional restrictions on $G$.


\subsubsection{The maximum degree of $G$}

Let $\Delta(G)$ denote the maximum degree of a graph $G$.
\begin{lemma}\label{huha}
If $G+2P_1$ or $G+P_2$ is of class $\mathcal C_0$, then $\Delta(G)\le3$. Moreover, this bound is tight.
\end{lemma}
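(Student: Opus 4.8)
The plan is to prove both directions of the implication. For the forward direction, suppose for contradiction that $\Delta(G)\ge 4$, say $v\in V(G)$ has neighbors $u_1,u_2,u_3,u_4$ in $G$. In the join $G+2P_1$ (resp. $G+P_2$), let $w_1,w_2$ be the two vertices of the second factor. Then $v$, $u_1,u_2,u_3,u_4$, together with $w_1,w_2$, induce a copy of $K_{2,5}$ in which $\{w_1,w_2,v\}$ is one side? More carefully: the vertices $w_1,w_2$ are each adjacent to all of $v,u_1,u_2,u_3,u_4$, and $v$ is adjacent to all of $u_1,u_2,u_3,u_4$; so $\{w_1,w_2,v\}$ versus $\{u_1,u_2,u_3,u_4\}$ is a $K_{3,4}$, and also $\{w_1,w_2\}$ versus $\{v,u_1,u_2,u_3,u_4\}$ is a $K_{2,5}$. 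I would use the $K_{3,4}$: by Theorem \ref{kleitman}, $cr(K_{3,4})=2$, so any drawing has at least two crossings, and then Lemma \ref{crossing} (the subgraph has $7\le 8$ vertices) shows $K_{3,4}$ has no $\mathcal C_1$-drawing, hence certainly no $\mathcal C_0$-drawing; since $K_{3,4}$ is a subgraph of $G+2P_1$ (and of $G+P_2$, as $P_2\supseteq 2P_1$ as a spanning subgraph on the vertex set), the whole join is not of class $\mathcal C_0$, contradicting the hypothesis. This yields $\Delta(G)\le 3$.

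For tightness, I must exhibit a graph $G$ with $\Delta(G)=3$ such that $G+2P_1$ (and $G+P_2$) is of class $\mathcal C_0$. The natural candidate is a cubic graph that is sparse enough, e.g. $G$ a disjoint union of triangles $K_3$, or a single $K_4$, or more generally enough disjoint $K_4$'s; one checks that $K_4+2P_1$ is $1$-planar (it is a small graph, $6$ vertices, subgraph of $K_6$) and admits a drawing with at most one crossing, hence is of class $\mathcal C_0$ by the reasoning in Lemma \ref{c0c2}. Even better, I would take $G=\emptyset$ on $3$ vertices won't have $\Delta=3$; instead take $G$ to be a graph on $4$ vertices with a vertex of degree $3$ — but Lemma \ref{3vertex} says $G+H$ with $|V(H)|=3$ and a degree-$3$ vertex is of class $\mathcal C_2$, which does not contradict tightness here since here $|V(H)|\le 2$. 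So I would simply display a $\mathcal C_0$-drawing of $K_4+2P_1$ (six vertices, at most one crossing), which witnesses that the bound $\Delta(G)\le 3$ cannot be improved to $\Delta(G)\le 2$.

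The main obstacle is the tightness construction: I need a concrete cubic (or maximum-degree-$3$) $G$ whose join with $2P_1$ or $P_2$ genuinely has a $\mathcal C_0$-drawing, and I must actually produce such a drawing (or cite one from \cite{ch,chm}). Verifying that $K_4+2P_1$ has a drawing with at most one crossing is a small explicit check — place the two independent vertices outside and the $K_4$ planarly in between, routing the join edges with a single crossing — but it should be drawn in a figure. The forward direction is routine once the right complete bipartite subgraph is identified; the only care needed is to confirm that $K_{3,4}$ (equivalently, the configuration around a degree-$4$ vertex) really sits inside both $G+2P_1$ and $G+P_2$, which is immediate because $2P_1$ is a spanning subgraph of $P_2$.
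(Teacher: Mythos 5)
Both halves of your argument contain genuine errors. In the forward direction, Lemma \ref{crossing} does \emph{not} show that $K_{3,4}$ has no $\mathcal C_1$-drawing: that lemma only says that a $\mathcal C_1$-drawing of a graph on at most $8$ vertices has at most two crossings, and since $cr(K_{3,4})=2$ there is no contradiction --- a $\mathcal C_1$-drawing with exactly two crossings remains possible, and indeed $K_{4,3}$ \emph{is} of class $\mathcal C_1$ (it is a subgraph of $P_4+P_3$, which the paper exhibits as $\mathcal C_1$). The lemma you actually need is Lemma \ref{hmco}: a $\mathcal C_0$-drawing of the $7$-vertex graph $K_{4,3}$ would have at most $\lfloor 7/4\rfloor =1$ crossing, contradicting $cr(K_{4,3})=2$. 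With that substitution the forward direction becomes exactly the paper's argument and is fine; your identification of the $K_{3,4}$ around a degree-$4$ vertex, and the remark that it sits in both $G+2P_1$ and $G+P_2$, are correct.

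The tightness half fails outright. Your witness $K_4+2P_1$ is $K_6$ minus one edge: it has $14$ edges on $6$ vertices, and since a planar graph on $6$ vertices has at most $12$ edges, \emph{every} drawing of it has at least two crossings; by Lemma \ref{hmco} a $\mathcal C_0$-drawing of a $6$-vertex graph has at most one crossing, so $K_4+2P_1$ is not of class $\mathcal C_0$ and your claim that it ``admits a drawing with at most one crossing'' is false. Disjoint unions of triangles have maximum degree $2$, and a disjoint union of $K_4$'s still yields $K_4+2P_1$ as a subgraph of the join, so none of your candidates works. The witness must be sparse and spread out (Lemma \ref{g2p1c0} forces $|E(G)|\le |V(G)|+2$, and even that is not sufficient, as $K_4$ shows). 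The paper's construction is a cycle $v_1v_2\dots v_kv_1$ with $k\ge 6$, with the chord $v_1v_3$ drawn inside and the chord $v_4v_6$ drawn outside; placing the two vertices of $2P_1$ into the two faces of size $k-1$ forces exactly two crossings, one on each chord, whose four endvertices are disjoint --- hence a $\mathcal C_0$-drawing with $\Delta(G)=3$. A variant with the edge $v_3v_4$ deleted handles $G+P_2$.
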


\begin{proof}
If $G$ has a vertex of degree at least four, then it contains $K_{4,1}$ as a subgraph. Therefore, $K_{4,3}$ is a subgraph of $G+H$. The crossing number of $K_{4,3}$ is two, therefore $K_{4,3}$ and it supergraph $G+H$ cannot be of class $\mathcal C_0$, see Lemma \ref{hmco}.

Now we show that the bound is sharp. Let $C_k=v_1v_2\dots v_kv_1$ be a cycle on $k\ge6$ vertices. The plane drawing of this cycle divides the plane into two parts. Insert the edges $v_1v_3$ and $v_4v_6$ into different parts. We obtain a graph $G_k$ which has $k$ vertices, $k+2$ edges and maximum degree three, moreover, if we put the vertices of $2P_1$ into different faces of size $k-1$, then we can easily obtain a $\mathcal C_0$-drawing of $G_k + 2P_1$.

Let $G_k^-$ be the graph obtained from $G_k$ by removing the edge $v_3v_4$. Clearly, $\Delta(G_k^-)=3$ and the graph $G_k^- +P_2$ has a $\mathcal C_0$-drawing. 
\end{proof}

\begin{lemma}
If $G+2P_1$ or $G+P_2$ is of class $\mathcal C_1$, then $\Delta(G)\le4$. Moreover, this bound is tight.
\end{lemma}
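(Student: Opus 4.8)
The statement has two parts: an upper bound $\Delta(G)\le 4$ when $G+2P_1$ or $G+P_2$ is of class $\mathcal C_1$, and tightness (existence of such $G$ with $\Delta(G)=4$). The plan is to mirror the structure of Lemma~\ref{huha}.

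For the upper bound, I would argue by contradiction: suppose $G$ has a vertex $v$ of degree at least $5$. Then $G$ contains $K_{5,1}$ as a subgraph (the star centred at $v$), so $G+H$ (with $H$ either $2P_1$ or $P_2$, both on $2$ vertices) contains $K_{5,1+2}=K_{5,3}$ as a subgraph — using that the extra edge of $P_2$ only adds to $G+H$. By Theorem~\ref{kleitman}, $cr(K_{5,3})=\lfloor 5/2\rfloor\lfloor 4/2\rfloor\lfloor 3/2\rfloor\lfloor 2/2\rfloor = 2\cdot 2\cdot 1\cdot 1 = 4$, so every $1$-planar drawing of $K_{5,3}$ has at least four crossings. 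But Lemma~\ref{crossing} says any $\mathcal C_1$-drawing of a graph on at most $8$ vertices has at most two crossings, and $K_{5,3}$ has $8$ vertices; combined with Lemma~\ref{hmco} this shows $K_{5,3}$ cannot be of class $\mathcal C_0$ or $\mathcal C_1$, hence is of class $\mathcal C_2$ (this is essentially Lemma~\ref{53}). Since a graph of class $\mathcal C_1$ cannot contain a subgraph of class $\mathcal C_2$, we get that $G+H$ is not of class $\mathcal C_1$, a contradiction. Therefore $\Delta(G)\le 4$.

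For tightness I would construct, for each sufficiently large $k$, a graph $G_k$ with $\Delta(G_k)=4$ such that $G_k+2P_1$ (and a slightly modified $G_k^-$ with $G_k^-+P_2$) has a $\mathcal C_1$-drawing. The natural approach is to start from a long cycle $C_k=v_1v_2\dots v_kv_1$ drawn in the plane, add a few chords to raise the maximum degree to $4$ while keeping enough large faces, place the two isolated vertices of $2P_1$ in two suitable faces and join them to their neighbours; the near-independent crossing pattern needed for $\mathcal C_1$ comes from having crossings that share exactly one vertex. A concrete candidate: take the cycle, add chords so that two vertices get degree $4$, and route the edges from the two added vertices so that each crossing involves edges incident to these hubs in a controlled way — then verify directly from a figure that the drawing is a $\mathcal C_1$-drawing and that it is not a $\mathcal C_0$-drawing (so $G_k+2P_1$ genuinely has class $\mathcal C_1$, not $\mathcal C_0$), the latter following because a degree-$4$ vertex forces $K_{4,3}$ hence at least two crossings. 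For the $P_2$ version, delete one suitable cycle edge to obtain $G_k^-$ as in Lemma~\ref{huha}.

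**Main obstacle.** The upper-bound direction is routine given the earlier lemmas; the real work is the tightness construction — exhibiting an explicit $G$ of maximum degree $4$ together with a drawing of $G+2P_1$ that is provably a $\mathcal C_1$-drawing (every pair of crossings shares at most one neighbour) and provably not realizable with fewer crossings in class $\mathcal C_0$. The paper almost certainly settles this with an explicit figure, as in Lemma~\ref{huha}; the care needed is in choosing the chords and the placement of the two vertices of $2P_1$ so that all crossings are "near-independent" and in checking that $G+2P_1$ is indeed $1$-planar. I would present the construction and then refer to a figure for the $\mathcal C_1$-drawing, noting that the class is not $\mathcal C_0$ because a degree-$4$ vertex yields a $K_{4,3}$ subgraph with $cr(K_{4,3})=2$, which by Lemma~\ref{hmco} rules out class $\mathcal C_0$.
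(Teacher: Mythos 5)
Your proposal is correct and follows essentially the same route as the paper: the upper bound is obtained exactly as in the paper (a degree-$5$ vertex forces a $K_{5,3}$ subgraph, which is of class $\mathcal C_2$ by the argument of Lemma~\ref{53}, so $G+H$ cannot be of class $\mathcal C_1$), and tightness is settled by exhibiting a maximum-degree-$4$ graph $G$ with a $\mathcal C_1$-drawing of $G+2P_1$, which the paper does via an explicit figure just as you propose. Your additional observation that the example cannot be of class $\mathcal C_0$ (via the $K_{4,3}$ subgraph and Lemma~\ref{hmco}) is a correct and worthwhile detail that the paper leaves implicit.
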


\begin{proof}
If $G$ has a vertex of degree at least five, then it contains $K_{5,1}$ as a subgraph. Therefore, $G+H$ contains $K_{5,3}$ as a subgraph, moreover, $K_{5,3}$ is  of class $\mathcal C_2$ (see the proof of Lemma \ref{53}). Consequently, the supergraph $G+H$ of $K_{5,3}$ cannot be of class $\mathcal C_1$.  

In Figure \ref{delta4} is a graph $G$ of maximum degree four and a $\mathcal C_1$-drawing of $G+2P_1$, therefore the bound is sharp.
\end{proof}

\begin{figure}
\centerline{
\begin{tabular}{c}
\includegraphics{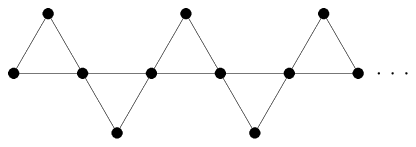}
\end{tabular}
$\longrightarrow$
\begin{tabular}{c}
\includegraphics{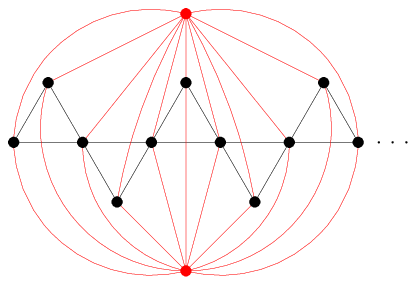}
\end{tabular}
}
\caption{The graph $G$ and a $\mathcal C_1$-drawing of $G+2P_1$.}
\label{delta4}
\end{figure}

\subsubsection{The number of edges of $G$}

\begin{theorem}\cite{zl}\label{edgesc0}
Let $W$ be a 1-planar graph of class $\mathcal C_0$. Then $|E(W)|\le 3,25|V(W)|-6$. Moreover this bound is tight.
\end{theorem}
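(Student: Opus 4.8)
The plan is to derive the upper bound from a one-line Euler estimate on the associated plane graph, and to establish sharpness by exhibiting an explicit infinite family of extremal graphs.

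\emph{Upper bound.} Fix any $\mathcal C_0$-drawing $D$ of $W$ and pass to its associated plane graph $D^\times$. Put $n=|V(W)|$, $m=|E(W)|$ and let $c$ be the number of crossings of $D$. Since adjacent edges never cross and every edge is crossed at most once, $D^\times$ is a \emph{simple} plane graph: no two false vertices are adjacent, and since $W$ is simple no pair of parallel edges arises. It has $n+c$ vertices and $m+2c$ edges, because each crossing turns two edges into four. Hence, for $n\ge 3$, Euler's formula gives $m+2c\le 3(n+c)-6$, i.e. $m\le 3n+c-6$. By Lemma~\ref{hmco}, applied to the drawing $D$, we have $c\le n/4$, and therefore $m\le 3n+\tfrac n4-6=\tfrac{13}{4}n-6$, which is the claimed bound.

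\emph{Sharpness.} For equality we need both Euler's bound to be tight ($D^\times$ a triangulation) and $c=n/4$, i.e. every vertex of $W$ must be an endpoint of a crossing. I would realise this for every $n=4t$ with $t\ge 2$ as follows. Take $t$ vertex-disjoint copies $Q_1,\dots,Q_t$ of the wheel on five vertices, where $Q_i$ has rim $4$-cycle $a_ib_ic_id_i$ and a hub $h_i$ drawn inside it, so that $h_i$ splits the interior of $Q_i$ into four triangles. Arrange $Q_1,\dots,Q_t$ like beads on a necklace: the rim edge $a_ib_i$ faces the unbounded (\emph{outer}) region, the opposite rim edge $c_id_i$ faces a bounded (\emph{inner}) region, and between consecutive copies $Q_i,Q_{i+1}$ (indices mod $t$) there is a quadrilateral region which we triangulate with the three edges $b_ia_{i+1}$, $c_id_{i+1}$, $b_id_{i+1}$. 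The outer region is now bounded by the $2t$-cycle $a_1b_1a_2b_2\cdots a_tb_t$ and the inner region by an analogous $2t$-cycle on the vertices $c_i,d_i$; triangulate each of these two polygons by a fan, say from $a_1$ and from $c_1$ respectively. A routine count yields a plane graph $T$ on $5t$ vertices with exactly $15t-6$ edges, hence a triangulation. Finally obtain $W$ by uncrossing each hub: delete $h_i$ and draw, inside the now-empty interior of $Q_i$, the two edges $a_ic_i$ and $b_id_i$ crossing once. This $W$ is simple (the only possible obstructions, $a_ic_i$ and $b_id_i$, are absent from $T$, and the crossing edges of distinct hubs join disjoint vertex sets), its drawing is a $\mathcal C_0$-drawing (the $t$ crossings have the pairwise disjoint neighbourhoods $\{a_i,b_i,c_i,d_i\}$), and $|V(W)|=4t$, $|E(W)|=(15t-6)-2t=\tfrac{13}{4}(4t)-6$, so the bound is attained for infinitely many values of $n$.

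\emph{Where the work is.} The edge count in the upper bound is routine. The only genuine obstacle is arranging the crossing gadgets so that $T$ can be completed to a triangulation without re-using a rim edge or introducing a forbidden ``square diagonal'' $a_ic_i$ or $b_id_i$, while at the same time keeping every vertex a crossing-endpoint (which is what forces $c=n/4$). The necklace layout, in which each square donates one rim edge to the outer boundary cycle and the opposite rim edge to the inner boundary cycle, is exactly what makes both demands compatible: no square ever has all four of its corners consecutive on a face that still has to be triangulated, so no forbidden diagonal is ever needed. A naive ``row'' arrangement fails, because the two end squares acquire four consecutive corners on the outer face and then force a forbidden short diagonal.
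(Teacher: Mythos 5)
The paper does not prove this statement at all: it is quoted verbatim from Zhang and Liu \cite{zl} and used as a black box, so there is no internal proof to compare against. Your argument is correct and self-contained. The upper bound is the standard one (and essentially the one in \cite{zl}): the associated plane graph of a $\mathcal C_0$-drawing is simple with $n+c$ vertices and $m+2c$ edges, Euler gives $m\le 3n+c-6$, and Lemma~\ref{hmco} gives $c\le n/4$, hence $m\le 3{,}25n-6$ (for $n\ge 3$; the stated bound is vacuously false for $n\le 2$, but that defect is in the theorem as quoted, not in your proof). Your sharpness construction also checks out: the necklace of $t$ wheels yields a maximal plane graph on $5t$ vertices with $15t-6$ edges, replacing each hub by a crossed pair of diagonals removes $4t$ edges and adds $2t$, giving $13t-6=\tfrac{13}{4}(4t)-6$ edges on $4t$ vertices, the $t$ crossings have pairwise disjoint neighbourhoods $\{a_i,b_i,c_i,d_i\}$, and no diagonal $a_ic_i$ or $b_id_i$ was already present, so the graph is simple and of class $\mathcal C_0$. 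Your closing remark correctly identifies the only delicate point, namely that a linear (non-cyclic) arrangement of the squares would force a forbidden diagonal at the ends. The one thing worth adding for completeness is an explicit observation that equality requires every vertex to lie in the neighbourhood of some crossing and $D^\times$ to be a triangulation, which your family realises only for $n\equiv 0 \pmod 4$; that is exactly the sense in which such bounds are called tight, so the proof stands.
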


%
%
%

\begin{lemma}\label{uff}
Let $W$ be an $n$-vertex 1-planar graph of class $\mathcal C_1$. Then every $\mathcal C_1$-drawing of $W$ has at most $0,6n -1,2$ crossings.
\end{lemma}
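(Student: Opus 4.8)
The goal is to bound the number of crossings in an arbitrary $\mathcal C_1$-drawing $D$ of an $n$-vertex 1-planar graph $W$ of class $\mathcal C_1$ by $\frac35 n - \frac65$. The natural tool is a discharging / counting argument on the associated plane graph $D^\times$. Write $c$ for the number of crossings; then $D^\times$ has $n+c$ vertices. Each false vertex has degree $4$, and by the structural assumption of a $\mathcal C_1$-drawing, any two false vertices share at most one common neighbor, and (as observed in the excerpt) no two false vertices are adjacent. The plan is to exploit this to show that $D^\times$ has many edges relative to $c$, or dually that it has too many faces, forcing $c$ to be small relative to $n$.

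\medskip
First I would set up Euler's formula for the connected plane graph $D^\times$ (reducing to the connected case first, or handling components additively). Let $E^\times$ and $F^\times$ denote its edge and face counts. Every false vertex contributes $4$ to the degree sum, so $2E^\times \ge 4c + (\text{true-vertex degrees})$. The key combinatorial input is that false vertices behave almost like an independent set with limited "clustering": since $|N_{D^\times}(c_1)\cap N_{D^\times}(c_2)|\le 1$, each true vertex $v$ that is a neighbor of a false vertex lies on a bounded structure, and more importantly any face of $D^\times$ incident with two false vertices must then have length at least $\dots$ — I would look at faces incident with false vertices and argue that a false vertex is incident to exactly $4$ faces, each of which must contain a true vertex between consecutive false edges. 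The crucial claim to establish is a lower bound of the form: the number of edges incident to false vertices is at least (something like) $\tfrac{10}{3}c$ or that each crossing "uses up" enough face-corners, which via Euler's formula $n + c - E^\times + F^\times = 2$ together with the face-length inequality $2E^\times \ge 3F^\times$ (no faces of length $\le 2$, since the graph is simple and we may assume the drawing has no such degeneracies) yields $c \le \tfrac35 n - \tfrac65$.

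\medskip
Concretely, the engine should be: $W$ being of class $\mathcal C_1$ but \emph{not} $\mathcal C_0$ means crossings come in small "near-independent" clusters; the $-\tfrac65$ additive constant and the coefficient $\tfrac35 = \tfrac{3}{5}$ strongly suggest a bound of the form $c \le \tfrac{3}{5}(n-2)$, i.e. $5c \le 3n - 6$, which is exactly what one gets if one can show $D^\times$ contains a planar subgraph on the $n$ true vertices with at least $2c$ edges "charged" appropriately, or if one shows each of the $c$ false vertices can be associated injectively with a configuration of $\ge 5$ true vertices and $\ge 3$ independent "slots". I would try to prove that the $4c$ false edges, together with the requirement that false vertices are pairwise non-adjacent and share $\le 1$ neighbor, force $F^\times \ge$ enough, then combine with Lemma~\ref{crossing} (at most two crossings when $n\le 8$) to seed an induction: delete a true vertex incident to few crossings, or contract a crossing cluster, and apply the bound on the smaller graph. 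Induction on $n$ with base cases $n\le 8$ handled by Lemma~\ref{crossing} (which gives $c\le 2 \le \tfrac35\cdot 8 - \tfrac65 = \tfrac{18}{5}$) is likely the cleanest route; the inductive step needs a vertex whose removal decreases $c$ by enough while decreasing $n$ by one.

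\medskip
\textbf{Main obstacle.} The hard part will be the inductive step (or, in the direct discharging version, ruling out local configurations where crossings are "dense"). I expect the difficulty to be showing that one can always find a true vertex $v$ such that deleting $v$ removes at least one crossing and the resulting drawing is still a $\mathcal C_1$-drawing (it automatically is, as deleting edges/vertices cannot increase shared neighborhoods) — but a crossing is destroyed only if $v$ is an endpoint of a crossing edge, and we must delete $v$ without destroying "too many" crossings relative to the budget: deleting a degree-$d$ true vertex can kill up to $d$ crossings, so we need a low-degree true vertex incident to a crossing, which requires a separate averaging argument using the near-independence of crossings. Balancing "$W$ is not of class $\mathcal C_0$" (so there \emph{is} a shared neighbor somewhere, otherwise nothing forces crossings to be sparse) against the per-step accounting is the delicate point, and this is presumably where Theorem~\ref{edgesc0} on $\mathcal C_0$-graphs gets used as an auxiliary bound on the crossing-free part of the drawing.
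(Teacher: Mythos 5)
Your proposal is a plan with several candidate routes, none of which is carried out, and the route you settle on (induction on $n$ with base case $n\le 8$ from Lemma~\ref{crossing}) is the one least likely to work: as you yourself observe, deleting a true vertex can destroy many crossings at once, while the bound only allows a budget of $\tfrac35$ of a crossing per vertex, so a vertex-by-vertex accounting has no obvious way to close. The paper does not induct at all; it uses a single global counting argument on the associated plane graph, and the one idea your sketch never pins down is precisely the step where the $\mathcal C_1$ hypothesis enters as a concrete inequality. Namely: since no two false vertices of $D^\times$ are adjacent, one can extend $D^\times$ to a plane (multi)triangulation $T$ by adding only edges between true vertices; $T$ has $2(n+c)-4$ triangular faces, of which exactly $4c$ are \emph{false} (the four triangles around each degree-$4$ false vertex). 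Each false face contains exactly one true edge, and --- this is the crux --- a true edge $uv$ can bound at most one false face, because two false faces on $uv$ would give two false vertices $c_1,c_2$ with $\{u,v\}\subseteq N_{D^\times}(c_1)\cap N_{D^\times}(c_2)$, contradicting the $\mathcal C_1$ condition. Hence $4c\le(\text{number of true edges})\le 3t$ where $t$ is the number of true faces, and $2n+2c-4=4c+t\ge 4c+\tfrac43 c$ gives $c\le\tfrac35 n-\tfrac65$.

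Your sketch does contain the right ingredients in spirit (Euler's formula, triangular faces, the suspicion that each crossing must be ``charged'' to enough true structure), but the inequality $4c\le 3t$ is the missing engine, and without it none of your three alternative plans produces the constant $\tfrac{10}{3}$. Two smaller points: the hypothesis that $W$ is not of class $\mathcal C_0$ plays no role (the bound is proved for an arbitrary $\mathcal C_1$-drawing, and $\mathcal C_0$-drawings trivially satisfy it), and Theorem~\ref{edgesc0} is not used anywhere in the argument.
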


\begin{proof}
Let $D$ be a $\mathcal C_1$-drawing of $W$. Let $c$ denote the number of crossings in $D$. The associated plane graph $D^\times$ has $n+c$ vertices. Note that no two false vertices are adjacent in $D^\times$. Hence, we can extend $D^\times$ to a plane multi-triangulation $T$ by adding some edges into non-triangular faces of $D^\times$ which join only true vertices.

The obtained multi-triangulation $T$ has $2n+2c-4$ faces (Let $F(T)$ denote the face set of $T$. Clearly, $3|F(T)|=2|E(T)|$, since $T$ is a multi-triangulation. Combining this equality with Euler's formula $|V(T)|-|E(T)|+|F(T)|=2$, we obtain $|F(T)|=2|V(T)|-4$) and $4c$ of them are false.

Observe that every true edge in $T$ is incident with at most one false face. Therefore, the number of false faces cannot be greater than the number of true edges. On the other hand, every true edge is incident with a true face. Hence, the number of true edges is at most the treble of the number of true faces. Consequently, $4c\le 3t$, where $t$ denotes the number of true faces.

Therefore, $2n+2c-4=4c+t\ge 4c+\frac43 c$. Consequently, $2n-4\ge \frac{10}{3}c$, which implies $c\le \frac 35 n -\frac 65$.
\end{proof}

\begin{theorem}\label{edges}
Let $W$ be a 1-planar graph of class $\mathcal C_1$. Then $|E(W)|\le 3,6|V(W)|-7,2$. Moreover this bound is tight.
\end{theorem}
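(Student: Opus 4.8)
The plan is to obtain the upper bound as an immediate corollary of Lemma \ref{uff}, and then to certify sharpness by exhibiting an explicit infinite family.

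For the upper bound, let $D$ be a $\mathcal C_1$-drawing of an $n$-vertex $1$-planar graph $W$ of class $\mathcal C_1$, and let $c$ be its number of crossings. Since in a $1$-planar drawing each edge is crossed at most once, deleting from every crossing pair exactly one of its two edges turns $D$ into a plane drawing of a simple planar graph on $n$ vertices with $|E(W)|-c$ edges. Hence $|E(W)|-c\le 3n-6$, and since $c\le\frac35 n-\frac65$ by Lemma \ref{uff}, we get $|E(W)|\le 3n-6+\frac35 n-\frac65=\frac{18}{5}n-\frac{36}{5}$.

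For sharpness I would first trace the equality case, which guides the construction. Both estimates above must be tight, so $c=\frac35 n-\frac65$; in particular $5\mid 3n-6$, i.e. $n\equiv 2\pmod 5$, say $n=5k+2$ and $c=3k$. Moreover the planar graph left after the $c$ deletions must be a triangulation, and un-subdividing the false vertices (each of degree four, whose four incident faces of $D^\times$ merge pairwise into two triangles once one of its two crossing edges is removed) shows this forces $D^\times$ itself to be a simple plane triangulation in which the $3k$ false vertices are independent, each of degree four with a link that is a $4$-cycle of true vertices. Equivalently, every crossing of $D$ is the pair of diagonals of a $K_4$ drawn on four true vertices (a ``$K_4$-gadget''), the boundary $4$-cycles of distinct gadgets pairwise share at most one true vertex, and the remaining $4k$ faces of $D^\times$ are triangles with all three edges true. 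Conversely, whenever such a drawing $D$ exists one has $|E(D^\times)|=3(n+c)-6$ and $|E(W)|=|E(D^\times)|-2c=3(8k+2)-6-6k=18k=\frac{18}{5}(5k+2)-\frac{36}{5}$, attaining the bound.

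It then remains to produce such drawings for infinitely many $k$. For $k=2$ I would take the cuboctahedron graph (a $3$-connected plane graph on $12$ vertices with eight triangular and six quadrilateral faces) and insert into each of its six quadrilateral faces a crossing pair of diagonals; the associated plane graph of the resulting drawing $D_2$ is a triangulation on $18$ vertices, and since any two quadrilateral faces of the cuboctahedron share at most one vertex, $D_2$ is a $\mathcal C_1$-drawing of a graph $W_2$ with $36=\frac{18}{5}\cdot 12-\frac{36}{5}$ edges. For general $k\ge 2$ I would assemble $K_4$-gadgets and true triangles into a ``square--triangle'' map of the sphere in which distinct gadgets again pairwise share at most one vertex and the union of the true triangles is a triangulated region (for instance by inserting further square--triangle belts into the cuboctahedral pattern), obtaining $W_k$ on $n=5k+2$ vertices with $18k=\frac{18}{5}n-\frac{36}{5}$ edges. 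Finally $W_k$ has a $\mathcal C_1$-drawing, hence is of class $\mathcal C_0$ or $\mathcal C_1$; and since $18k>\frac{13}{4}(5k+2)-6$ for every $k\ge 1$, Theorem \ref{edgesc0} rules out class $\mathcal C_0$, so $W_k$ is of class $\mathcal C_1$ and meets the bound. I expect the construction of the infinite family (and the verification that each $D_k^\times$ is a genuine simple triangulation whose gadgets pairwise share at most one true vertex) to be the main obstacle; the edge count and the exclusion of class $\mathcal C_0$ via Theorem \ref{edgesc0} are then routine.
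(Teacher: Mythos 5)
Your upper bound argument is exactly the paper's: delete one edge per crossing to get a planar graph, so $|E(W)|\le 3n-6+c$, and then invoke Lemma \ref{uff}. Where you genuinely diverge is the tightness construction. The paper exhibits a single $27$-vertex, $90$-edge example assembled by nesting two figure-defined gadgets $G_1$ and $G_2$ inside distinguished triangles; its verification rests entirely on the figure. You instead analyze the equality case ($n=5k+2$, $c=3k$, $D^\times$ a triangulation whose false vertices have pairwise nearly disjoint $4$-cycle links) and realize it concretely with the cuboctahedron: $12$ true vertices, both diagonals inserted into each of the six square faces, giving $36=\frac{18}{5}\cdot 12-\frac{36}{5}$ edges; since any two square faces of the cuboctahedron share at most one vertex this is a $\mathcal C_1$-drawing, and Theorem \ref{edgesc0} ($36>3{,}25\cdot 12-6=33$) excludes class $\mathcal C_0$. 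That single example already certifies tightness in the same sense the paper's single example does, and it has the advantage of being verifiable without a picture (the paper's nested construction corresponds to $k=5$ in your parametrization, and it likewise never spells out the exclusion of class $\mathcal C_0$, which your Theorem \ref{edgesc0} step makes explicit). The only incomplete part of your write-up is the extension to an infinite family via ``square--triangle belts,'' which you correctly flag as unfinished; but since the theorem as stated only asserts sharpness of the bound, the cuboctahedron instance suffices and the proposal stands as a complete and correct proof.
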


\begin{proof}
Let $D$ be a $\mathcal C_1$-drawing of $W$. Every crossing arises from two different edges. If we remove one crossed edge for each crossing in $D$, then we obtain a drawing without crossings, that is, a plane graph. Any plane graph on $n$ vertices has at most $3n-6$ edges. We removed $c\le 0,6n -1,2$ edges (see Lemma \ref{uff}), hence, the number of edges of $D$ (and also of $W$) is at most $3n-6+c\le 3,6n-7,2$.

Now we show that the bound is sharp.  We can construct graphs with the desired property  using the graphs depicted in Figure \ref{figure}. Let $S$ be a graph obtained from $G_1$ by inserting the graph $G_2$ into the central (green) triangle of $G_1$ (by identifying the green triangles).  Let $T$ be a graph obtained from $S$ by inserting the graph $G_1$ into the central (blue) triangle of $S$ (by identifying the blue triangles). This graph has 27 vertices and 90 edges, moreover, $27\cdot3,6-7,2=90$.
\end{proof}

\begin{figure}
\centerline{
\begin{tabular}{ccc}
\includegraphics{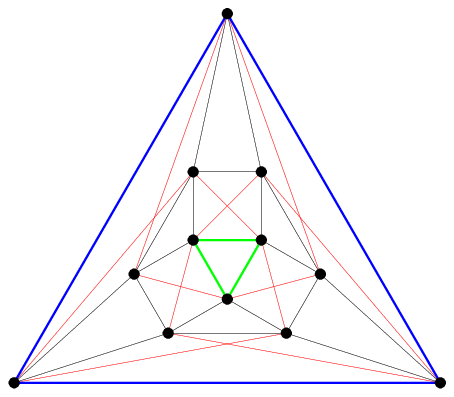}&&
\includegraphics{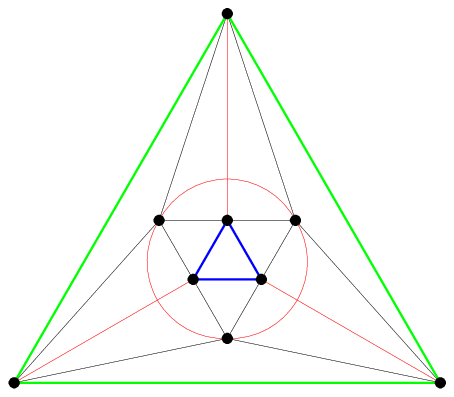}
\\$G_1$&&$G_2$
\end{tabular}
}
\caption{The graphs $G_1$ and $G_2$.}
\label{figure}
\end{figure}

\begin{lemma}\label{g2p1c0}
If $G+2P_1$ is of class $\mathcal C_0$, then $|E(G)|\le|V(G)|+2$. Moreover, this bound is tight.
\end{lemma}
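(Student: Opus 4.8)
The plan is to argue the upper bound $|E(G)|\le |V(G)|+2$ by a direct Euler-type counting on a $\mathcal C_0$-drawing $D$ of $G+2P_1$, and then to exhibit an explicit extremal graph (which will essentially be the graph $G_k$ already constructed in the proof of Lemma \ref{huha}). Write $n=|V(G)|$ and let $u_1,u_2$ be the two vertices of the copy of $2P_1$. In $D$, every edge of $G+2P_1$ is either an edge of $G$, or one of the $2n$ edges from $\{u_1,u_2\}$ to $V(G)$. First I would pass to the associated plane graph $D^\times$: since $D$ is a $\mathcal C_0$-drawing, no two crossings share a neighbor, so each false vertex ``uses up'' four distinct true vertices, which is where the scarcity of crossings comes from.

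The key step is to bound the number of crossings. A crossing cannot involve two edges incident with the same vertex, so any crossing involves edges $ab$ and $cd$ with $\{a,b,c,d\}$ distinct; in particular at most one endpoint of each crossed pair can lie in $\{u_1,u_2\}$. I would split by how many of the four endpoints of a crossing lie in $\{u_1,u_2\}$ (this is $0$, $1$, or $2$) and use the independence of crossings to keep the different crossings from interfering. The cleanest route, I expect, is this: delete the vertices $u_1,u_2$ together with their $2n$ incident edges; what remains is the plane graph $G$ (now drawn without crossings, since every crossing involved at least one edge incident to $u_1$ or $u_2$ — indeed a $\mathcal C_0$-drawing in which two edges of $G$ crossed would, after adding back $u_1,u_2$ in appropriate faces, still be $\mathcal C_0$, but I must check crossings internal to $G$ can be removed or bounded). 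So the real content is showing that in a $\mathcal C_0$-drawing of $G+2P_1$ one may assume no two edges of $G$ cross each other: if $ab,cd\in E(G)$ cross, their four endpoints plus $u_1$ give five true vertices that must all be distinct from those of any other crossing by $\mathcal C_0$-independence, and one can reroute $ab$ along $cd$ near the crossing to remove it without creating new crossings (this is the same rerouting idea used in the first displayed paragraph of Section 2 and in Lemma \ref{hmco}'s neighborhood). Once $G$ is drawn planarly, $|E(G)|\le 3n-6$ is too weak; instead I use that $G$ sits inside the planar drawing of $G+2P_1$, so $G$ has a plane drawing in which $u_1$ and $u_2$ can each be placed in a face and joined to all $n$ vertices of $G$ — that forces each of $u_1,u_2$ to see all of $V(G)$ on the boundary of a single face, hence $G$ is outerplanar-like: more precisely $G+u_1$ and $G+u_2$ are planar, and $G+2P_1$ planar forces $G$ to have at most $n+2$ edges by Euler's formula applied to the planar graph $G+2P_1$ (which has $n+2$ vertices and $|E(G)|+2n$ edges, hence $|E(G)|+2n\le 3(n+2)-6=3n$, giving $|E(G)|\le n$ — too strong, so crossings must be put back in carefully, and the correct bookkeeping with up to $\tfrac n4$ crossings each saving at most one edge from planarity yields exactly $|E(G)|\le n+2$).

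For tightness I would take the graph $G_k$ from the proof of Lemma \ref{huha}: it has $k$ vertices, $k+2$ edges, and $G_k+2P_1$ admits a $\mathcal C_0$-drawing, so the bound $|E(G)|\le|V(G)|+2$ is attained for every $k\ge 6$.

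The main obstacle I anticipate is the middle step: correctly accounting for how many edges of $G+2P_1$ can be ``saved'' relative to a plane graph. The count has to reconcile two facts — that $G+2P_1$ with its $n+2n=3n-$something edges is near the planar limit $3(n+2)-6=3n$, and that a $\mathcal C_0$-drawing allows roughly $\tfrac14(n+2)$ crossings, each of which lets us exceed the planar edge count by one. Getting the arithmetic to land exactly on $|E(G)|\le n+2$ (rather than an off-by-a-small-amount bound) will require being precise about which crossed edges are edges of $G$ versus edges incident to $u_1,u_2$, and about the fact that at most $n$ of the $2n$ join edges can be crossed since $u_1$ and $u_2$ have all their incident edges pairwise non-crossing. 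I would organize this as: (1) reroute to kill all $G$-internal crossings; (2) observe every remaining crossing involves exactly one join edge at $u_1$ and one at $u_2$ (a crossing with only one join edge endpoint can also be rerouted), so there are at most $\min(\deg u_1,\deg u_2)\le n$ such crossings but $\mathcal C_0$-independence caps this at $\lfloor n/2\rfloor$-ish; (3) delete one join edge per crossing to get a plane graph on $n+2$ vertices with $|E(G)|+2n-c$ edges, apply $|E|\le 3(n+2)-6$, and simplify using the bound on $c$ to reach $|E(G)|\le n+2$.
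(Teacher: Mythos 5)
Your skeleton (delete the two join vertices, argue $G$ is drawn without crossings, bound the remaining crossings, apply Euler's formula, and certify tightness with the graph $G_k$ from Lemma \ref{huha}) is the same as the paper's, and your tightness example is exactly the one used there. However, two steps in the middle are genuinely broken. First, your mechanism for eliminating crossings between two edges of $G$ — ``reroute $ab$ along $cd$ near the crossing'' — does not work: the rerouting trick from Section 2 lets you insert a \emph{new} edge $xz$ crossing-free along a crossing pair, but it cannot uncross two independent edges that already cross (their crossing may be topologically forced). The paper instead shows such a crossing cannot exist at all: if $xy,zw\in E(G)$ cross at $c$, take the sub-drawing $\{xy,zw\}+2P_1$ (at most one crossing by Lemma \ref{hmco}), add the four edges $xz,zy,yw,wx$ crossing-free along the crossing pair, and obtain a $\mathcal C_0$-drawing of $K_6$ minus one edge; that graph has $14>12$ edges, hence every drawing of it has at least two crossings, contradicting Lemma \ref{hmco}. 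This is a counting contradiction, not a redrawing.

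Second, and decisively, your crossing count never lands on the right constant. You cap the number of crossings at roughly $n/4$ or $\lfloor n/2\rfloor$, and you concede yourself that the arithmetic does not then produce $|E(G)|\le n+2$ (it produces only $|E(G)|\le n+\Theta(n)$). The observation you are missing is that once no two edges of $G$ cross, every crossing in $D$ involves an edge incident with $u_1$ or $u_2$, so every false vertex of $D^\times$ has $u_1$ or $u_2$ in its neighborhood; since a $\mathcal C_0$-drawing has pairwise \emph{disjoint} crossing neighborhoods, the whole drawing has at most two crossings — a constant, independent of $n$. Removing one crossed edge per crossing and applying planarity then gives $|E(G)|+2|V(G)|\le 3(|V(G)|+2)-6+2=3|V(G)|+2$, i.e. $|E(G)|\le|V(G)|+2$. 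Without this ``at most two crossings'' step your argument cannot reach the stated bound.
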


\begin{proof}
Let $D$ be a $\mathcal C_0$-drawing of $G+2P_1$. Remove the two vertices of $2P_1$ from $D$. In such a way we obtain a drawing of $G$. First we show that this $\mathcal C_0$-drawing of $G$ contains no crossings. Assume that in this drawing of $G$ the edges $xy$ and $zw$ cross each other at $c$. Now consider a subgraph $\{xy,zw\}+2P_1$ of $G+2P_1$ in the drawing $D$. Lemma \ref{hmco} implies that this drawing of  $\{xy,zw\}+2P_1$ can contain at most one crossing.  Now we draw the edges $xz,zy,yw,wx$ to $\{xy,zw\}+2P_1$ such that they are crossing-free by following the edges that cross at $c$ from the endvertices until they meet in a close neighborhood of $c$. In this way we obtain a $\mathcal C_0$-drawing of $K_6$ minus one edge. Any planar graph on 6 vertices has at most 12 edges. The graph $K_6$ minus one edge has 6 vertices and 14 edges. Therefore, any drawing of $K_6$ minus one edge has at least two crossings, consequently, it cannot admit a $\mathcal C_0$-drawing (see Lemma \ref{hmco}), a contradiction.

Since the drawing $D$ without $2P_1$ is crossing-free, every crossed edge in $D$ has an endvertex in $2P_1$. Hence, $D$ contains at most two crossings (since it is a $\mathcal C_0$-drawing).  If we remove one crossed edge for each crossing in $D$, then we obtain a drawing without crossings. This implies
$|E(G)|+2|V(G)|=|E(G+2P_1)|\le 3|V(G+2P_1)|-6+2=3|V(G)|+2$,
which proves the claim.

To see that the bound is sharp it is sufficient to consider the graph $G_k$ defined in the proof of Lemma \ref{huha}.
\end{proof}

\begin{lemma}
If $G+P_2$ is of class $\mathcal C_0$, then $|E(G)|\le|V(G)|+1$. Moreover, this bound is tight.
\end{lemma}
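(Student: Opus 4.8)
The plan is to mirror the proof of Lemma~\ref{g2p1c0} almost verbatim, adjusting the arithmetic for the fact that $P_2$ has only one edge rather than none. The target inequality is $|E(G)|\le |V(G)|+1$, so we are trying to show that a $\mathcal C_0$-drawing of $G+P_2$ cannot be too dense, and then exhibit a graph attaining equality.

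First I would take a $\mathcal C_0$-drawing $D$ of $G+P_2$, let $u_1,u_2$ be the two vertices of $P_2$ (joined by the single edge $u_1u_2$), and delete $u_1,u_2$ from $D$ to obtain a drawing of $G$. As in Lemma~\ref{g2p1c0}, the key claim is that this induced drawing of $G$ is crossing-free: if edges $xy$ and $zw$ of $G$ crossed at a point $c$, then looking at the subgraph $\{xy,zw\}+P_2$ inside $D$ (which is a $\mathcal C_0$-drawing, hence has at most one crossing by Lemma~\ref{hmco}), I would reroute the four ``rim'' edges $xz,zy,yw,wx$ crossing-free along the two threads through $c$, producing a $\mathcal C_0$-drawing of $\big(\{xy,zw\}+P_2\big)\cup\{xz,zy,yw,wx\}$. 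This is $K_6$ minus the edge $xz$-type pattern $\dots$ — more precisely, on the six vertices $x,y,z,w,u_1,u_2$ we have all $\binom 62=15$ pairs except we must check which are present: $xy,zw,xz,zy,yw,wx$ among the $\{x,y,z,w\}$ block is $5$ of the $6$ possible (missing $yz$? no — $zy$ is there; actually missing $xw$? no — $wx$ is there; the only missing one is the ``other diagonal'', i.e.\ exactly one edge is absent), plus all $8$ join edges and the edge $u_1u_2$, giving $5+8+1=14$ edges on $6$ vertices, hence at least two crossings in any drawing, contradicting Lemma~\ref{hmco}. So the deletion drawing is planar.

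Since $D$ restricted to $G$ is crossing-free, every crossing of $D$ involves an edge incident with $u_1$ or $u_2$, so $D$ has at most two crossings. Deleting one crossed edge per crossing yields a plane graph on $|V(G)|+2$ vertices, hence with at most $3(|V(G)|+2)-6 = 3|V(G)|$ edges; adding back the (at most two) deleted edges gives $|E(G+P_2)|\le 3|V(G)|+2$. Since $|E(G+P_2)| = |E(G)| + 2|V(G)| + 1$ (the $+1$ being the $P_2$ edge), we get $|E(G)| \le |V(G)| + 1$, as desired. For tightness I would use the graph $G_k^-$ from the proof of Lemma~\ref{huha}: it has $k$ vertices, $k+1$ edges, $\Delta(G_k^-)=3$, and the proof of Lemma~\ref{huha} already exhibits a $\mathcal C_0$-drawing of $G_k^-+P_2$, so the bound $|E(G)|=|V(G)|+1$ is attained.

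**The main obstacle** will be the bookkeeping in the crossing-elimination step: one has to be careful that rerouting the four rim edges along the threads through $c$ does not create a forbidden intersection pattern among themselves or with the $P_2$-edges, and that the resulting graph really is a $6$-vertex graph with $14$ edges (so that Lemma~\ref{hmco}, i.e.\ the planar bound $3\cdot 6-6=12$, forces at least two crossings). Everything else is a routine edge count, structurally identical to Lemma~\ref{g2p1c0}.
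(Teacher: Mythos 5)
Your proposal is correct and is exactly the route the paper intends: its proof of this lemma consists of the single sentence that one proceeds as in Lemma~\ref{g2p1c0}, and you have carried that adaptation out faithfully, including the correct tightness example $G_k^-$ from Lemma~\ref{huha}. One harmless slip: the six edges $xy,zw,xz,zy,yw,wx$ cover \emph{all} $\binom{4}{2}=6$ pairs of $\{x,y,z,w\}$, so the rerouting actually yields the full $K_6$ with $15$ edges rather than $K_6$ minus an edge with $14$; either count exceeds $12$, so the contradiction with Lemma~\ref{hmco} stands.
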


\begin{proof}
We can proceed similarly as in the proof of Lemma \ref{g2p1c0}.
\end{proof}

\begin{lemma}
If $G+P_1$ is of class $\mathcal C_0$, then $|E(G)|\le 2,25|V(G)|-2,75$. Moreover, this bound is tight.
\end{lemma}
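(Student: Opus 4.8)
The plan is to obtain the upper bound as an immediate corollary of Theorem~\ref{edgesc0} and then to prove sharpness by an explicit family of examples. Since the single vertex of $P_1$ is joined to every vertex of $G$, the graph $G+P_1$ has $|V(G)|+1$ vertices and exactly $|E(G)|+|V(G)|$ edges, and by hypothesis it is of class $\mathcal C_0$. Applying Theorem~\ref{edgesc0} to $G+P_1$ therefore gives $|E(G)|+|V(G)|\le 3,25(|V(G)|+1)-6$, and after rearranging this is precisely $|E(G)|\le 2,25|V(G)|-2,75$. Unlike the situations treated in Lemma~\ref{g2p1c0} and in the preceding lemma for $G+P_2$, where the general $\mathcal C_0$ edge bound is not strong enough and one has to delete the vertices of $H$ and analyse the residual drawing of $G$, here no refinement is needed: removing a single universal vertex from an edge-maximal $\mathcal C_0$ graph drops exactly the right number of edges, so the generic bound is already optimal for the join with $P_1$.

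For the sharpness part I would construct, for every $n\equiv 3\pmod 4$ with $n\ge 7$, an $n$-vertex graph $G$ with $2,25n-2,75=\tfrac{9n-11}{4}$ edges such that $G+P_1$ is a $\mathcal C_0$ 1-planar graph; equivalently, an edge-maximal $\mathcal C_0$ 1-planar graph on $n+1$ vertices having a universal vertex $u$. The intended construction starts from the wheel with hub $u$ and rim $v_1v_2\cdots v_n$ (so $u$ is universal), triangulates the rim $n$-gon by non-crossing chords so chosen that, besides the two wheel-faces $uv_1v_2$ and $uv_2v_3$ forming a quadrilateral on $\{u,v_1,v_2,v_3\}$, each consecutive block $v_{4k}v_{4k+1}v_{4k+2}v_{4k+3}$ is the union of two triangular faces, and then, inside each of these $\tfrac{n+1}{4}$ pairwise vertex-disjoint quadrilaterals, draws the missing diagonal so that it crosses the diagonal already present. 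The resulting drawing is 1-planar (each added diagonal is crossed exactly once and all other edges are crossing-free) and is a $\mathcal C_0$-drawing because the quadrilaterals carrying the crossings are vertex-disjoint; the hub $u$ stays adjacent to every other vertex; and counting edges of $G+P_1$ yields $3(n+1)-6+\tfrac{n+1}{4}=\tfrac{13(n+1)}{4}-6$, so $G=(G+P_1)-u$ has exactly $2,25n-2,75$ edges. In the smallest case $n=7$ one can take the hub $u$, rim $v_1\dots v_7$, rim chords $v_2v_4,\,v_2v_7,\,v_4v_7,\,v_5v_7$, and added diagonals $v_1v_3$ (crossing $uv_2$) and $v_4v_6$ (crossing $v_5v_7$); this gives $20$ edges on $8$ vertices, hence $13$ edges on the $7$ vertices of $G$.

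The upper-bound computation and the edge count of the construction are routine. The step I expect to require the most care, and hence the main obstacle, is the structural verification for the construction: one must check that the $\tfrac{n+1}{4}$ quadrilaterals used to introduce crossings can simultaneously be realised as unions of faces of a single triangulation of $G+P_1$ before the diagonals are added, be pairwise vertex-disjoint and exhaust all $n+1$ vertices, include the hub $u$ on exactly one of them, and leave room for the crossing diagonals to be drawn inside their quadrilaterals without producing further crossings or parallel edges. This forces the triangulation of the rim $n$-gon to be chosen carefully --- peeling off the non-hub blocks and fanning the remaining ``core'' polygon from a suitable rim vertex works --- after which 1-planarity, the $\mathcal C_0$ property, and the edge count all follow directly.
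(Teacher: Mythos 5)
Your upper bound argument is exactly the paper's: apply Theorem~\ref{edgesc0} to the $(|V(G)|+1)$-vertex graph $G+P_1$ and subtract the $|V(G)|$ join edges; your remark that no refinement of the generic bound is needed here (in contrast to Lemma~\ref{g2p1c0}) also matches how the paper is organized. For sharpness, however, you take a genuinely different route. The paper builds a ladder-like graph $G_{n-1}$ on two paths $a_1\dots a_k$, $b_1\dots b_{k-1}$ with crossings between $a_ib_{i+1}$ and $a_{i+1}b_i$, and then places the apex vertex in the outer face of $G_{n-1}^\times$ so that exactly one of its edges is crossed (Figure~\ref{c0}); the outer-face placement is what makes the apex joinable to everything while keeping the crossings independent. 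You instead build an edge-maximal $\mathcal C_0$ graph on $n+1$ vertices with a universal hub: a wheel plus a triangulation of the rim polygon, with one crossing pair of diagonals inserted into each of $\frac{n+1}{4}$ pairwise vertex-disjoint quadrilaterals, one of which contains the hub. Both constructions realize equality exactly when $|V(G)|\equiv 3\pmod 4$ (the only residue class for which $2,25|V(G)|-2,75$ is an integer), so neither is more general than the other. Your approach has the conceptual advantage of making the extremal structure transparent --- it is visibly the tight example for Theorem~\ref{edgesc0} conditioned on having a universal vertex --- at the cost of the bookkeeping you yourself flag: one must verify that the rim triangulation can be chosen so that the $\frac{n+1}{4}$ quadrilaterals are simultaneously unions of two faces, partition the vertex set, and do not already contain the diagonal to be added as an edge (e.g.\ a naive fan from $v_1$ would collide with the diagonal $v_1v_3$ of the hub quadrilateral). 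Your explicit $n=7$ instance and the peel-and-fan description are enough to see this can be done, so the argument is complete in substance, though the general-$n$ verification should be written out if this were to replace the paper's proof.
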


\begin{proof}
From Theorem \ref{edgesc0} we obtain
$|E(G)|+|V(G)|=|E(G+P_1)|\le 3,25|V(G+P_1)|-6=3,25|V(G)|-2,75$ 
which proves the claim.

Now we prove that the bound is sharp. Put $n = 2k$ with $k \geq 2$ being even, take two paths $a_1 a_2\dots a_{k-1}a_k, b_1 b_{2} \dots b_{k-1}$ and, for each $i \in \{1,\dots, k-1\}$, add new edges $a_ib_i, a_{i+1}b_i$ and the edge $a_{k-2}a_k$; in addition, for each even $j \in \{2,\dots, k-2\}$, add new edges $b_ja_{j-1}$. The resulting graph $G_{n-1}$ has $n-1$ vertices and $2,25(n-1)-2,75$ edges and a 1-planar drawing in which the edges $a_ib_{i+1},a_{i+1}b_i$ cross, for each odd $i\in\{1, \dots, k-3\}$ and the other edges are crossing-free (see Figure \ref{c0}). If we put a new vertex  $v$ into the outerface of $G_{n-1}^\times$ and join it with all vertices of $G_{n-1}^\times$ such that the edge $va_{k-1}$ cross the edge $b_{k-1}a_k$ and the other edges incident with $v$ are crossing-free, then we obtain a $\mathcal C_0$-drawing of $G_{n-1} + P_1$.
\end{proof}

\begin{figure}
\centerline{
\includegraphics{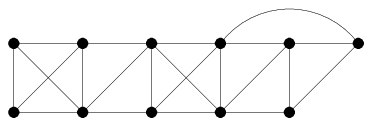}
}
\caption{The graph $G_{11}$.}
\label{c0}
\end{figure}

\begin{lemma}
If $G+2P_1$ is of class $\mathcal C_1$, then $|E(G)|\le 1,6 |V(G)|$.
\end{lemma}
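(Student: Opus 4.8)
The plan is to derive this bound immediately from Theorem~\ref{edges}. Write $n=|V(G)|$ and apply that theorem to $W=G+2P_1$, which by hypothesis is a $1$-planar graph of class $\mathcal C_1$. Since $2P_1$ has no edges and the join contributes exactly $2n$ edges between $V(G)$ and the two isolated vertices, we have $|E(G+2P_1)|=|E(G)|+2n$, while $|V(G+2P_1)|=n+2$. Theorem~\ref{edges} then gives $|E(G)|+2n\le 3,6(n+2)-7,2=3,6n$, and rearranging yields $|E(G)|\le 1,6n=1,6|V(G)|$, as claimed.

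There is essentially no obstacle: all the work sits in Theorem~\ref{edges} (which in turn rests on the crossing count of Lemma~\ref{uff}), and the present statement is the routine specialization to joins with two isolated vertices, exactly paralleling the way the $\mathcal C_0$ analogue was obtained from Theorem~\ref{edgesc0} in the preceding lemma. One must only be slightly careful that $W=G+2P_1$ genuinely satisfies the hypothesis of Theorem~\ref{edges}, namely that it is of class $\mathcal C_1$ — but that is precisely what is assumed.

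If a self-contained argument were preferred, one could instead imitate the proof of Lemma~\ref{g2p1c0}: take a $\mathcal C_1$-drawing $D$ of $G+2P_1$, bound its number of crossings by $0,6(n+2)-1,2=0,6n$ using Lemma~\ref{uff}, delete one crossed edge for each crossing to obtain a plane graph on $n+2$ vertices having at most $3(n+2)-6=3n$ edges, and conclude $|E(G)|+2n=|E(G+2P_1)|\le 3n+0,6n=3,6n$, hence $|E(G)|\le 1,6n$. This merely reproduces the proof of Theorem~\ref{edges} in this special case, so the short route via Theorem~\ref{edges} is the natural one. Note that, unlike several neighbouring lemmas, no tightness claim is made here, so no extremal construction is required.
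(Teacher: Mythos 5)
Your proposal is correct and coincides with the paper's own proof: both apply Theorem~\ref{edges} to $W=G+2P_1$, using $|E(G+2P_1)|=|E(G)|+2|V(G)|$ and $|V(G+2P_1)|=|V(G)|+2$ to get $|E(G)|+2|V(G)|\le 3,6|V(G)|$. The alternative self-contained route you sketch is just an unrolling of the same argument, so nothing further is needed.
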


\begin{proof}
From Theorem \ref{edges} we obtain
$|E(G)|+2|V(G)|=|E(G+2P_1)|\le 3,6|V(G+2P_1)|-7,2=3,6|V(G)|$
which proves the claim.
\end{proof}

\begin{lemma}
There is a graph $G$ with $|E(G)|=1,5|V(G)|$ such that $G+2P_1$ is of class $\mathcal C_1$.
\end{lemma}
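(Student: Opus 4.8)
The plan is to exhibit a one-parameter family of graphs that attains the ratio $\tfrac32$. For an even integer $k\ge 4$, let $G=G_k$ be the graph on vertex set $\{w_1,\dots,w_k\}\cup\{t_1,\dots,t_k\}$ whose edges are the cycle edges $w_iw_{i+1}$ together with the pairs $t_iw_i$ and $t_iw_{i+1}$ (all indices taken mod $k$); informally, $G_k$ is the cycle $C_k$ with a triangle glued onto every edge. Then $|V(G_k)|=2k$, $|E(G_k)|=3k=1,5\,|V(G_k)|$, and $\Delta(G_k)=4$, so the only thing to prove is that $G_k+2P_1$ is of class $\mathcal C_1$.

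First I would produce a concrete $\mathcal C_1$-drawing $D$ of $G_k+2P_1$, starting from the planar $k$-gonal bipyramid: draw $w_1,\dots,w_k$ on a circle with one apex $u_1$ inside and the other apex $u_2$ outside, so that the faces are the triangles $u_1w_iw_{i+1}$ and $u_2w_iw_{i+1}$. For each \emph{odd} $i$, insert $t_i$ into the inner face $u_1w_iw_{i+1}$, join it to $u_1,w_i,w_{i+1}$ inside that face, and route $t_iu_2$ so that it crosses $w_iw_{i+1}$ exactly once and then reaches $u_2$ through the outer face $u_2w_iw_{i+1}$; for each \emph{even} $i$ do the mirror image (place $t_i$ in the outer face and let $t_iu_1$ cross $w_iw_{i+1}$). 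Since distinct indices use distinct faces of the bipyramid, no face receives two interfering pieces, so $D$ is $1$-planar with exactly $k$ crossings $c_1,\dots,c_k$, where $c_i$ is the crossing of $w_iw_{i+1}$ with the unique crossed edge at $t_i$; the other $5k$ edges of $G_k+2P_1$ are crossing-free. (As a consistency check, $k<1{,}2(2k+2)-1{,}2$, as demanded by Lemma~\ref{uff}.)

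Next I would verify the $\mathcal C_1$-condition by a short case analysis of $N_{D^\times}(c_i)\cap N_{D^\times}(c_j)$. Here $N_{D^\times}(c_i)=\{w_i,w_{i+1},t_i,a_i\}$ with $a_i=u_2$ for odd $i$ and $a_i=u_1$ for even $i$. If $j\equiv i\pm1\pmod k$, then $a_i\ne a_j$, the $t$-vertices differ, and $\{w_i,w_{i+1}\}\cap\{w_j,w_{j+1}\}$ is a single vertex, so the intersection has size $1$. If $j$ has the same parity as $i$ but $j\not\equiv i$, then $\{w_i,w_{i+1}\}\cap\{w_j,w_{j+1}\}=\emptyset$ (here $k\ge4$ even is used, to rule out wrap-around), so the intersection is just $\{a_i\}$. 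In all remaining cases the two neighbourhoods are disjoint. Hence $|N_{D^\times}(c_i)\cap N_{D^\times}(c_j)|\le1$ for all $i\ne j$, so $D$ is a $\mathcal C_1$-drawing. Finally, since $|E(G_k)|=3k>2k+2=|V(G_k)|+2$, Lemma~\ref{g2p1c0} shows that $G_k+2P_1$ is not of class $\mathcal C_0$; combined with the drawing $D$, this gives $G_k+2P_1\in\mathcal C_1$.

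The only genuinely non-routine point is the parity trick in the construction. If one routed every $t_iu_{\bullet}$ across $w_iw_{i+1}$ toward the \emph{same} apex, then consecutive crossings $c_i,c_{i+1}$ would share both that apex and the common vertex $w_{i+1}$, forcing the drawing to be of class $\mathcal C_2$; alternating the apex between odd and even $i$ is exactly what keeps every pair of crossings down to a single common neighbour. Everything else — the Euler-type bookkeeping for the bipyramid, the face-disjointness argument for $1$-planarity, and the closing appeal to Lemma~\ref{g2p1c0} — is routine.
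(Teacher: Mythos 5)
Your proof is correct. You construct a different witness graph than the paper does: the paper takes a cycle $C_{4\ell}$ and adds $2\ell$ short chords, alternately inside and outside, giving $4\ell$ vertices and $6\ell$ edges, and then places the two vertices of $2P_1$ in the two large faces; you instead glue a triangle onto every edge of $C_k$ and use the two join vertices as the apexes of a bipyramid, alternating which apex the crossed edge $t_iu_{\bullet}$ runs to. Both constructions rest on the same underlying idea -- a cycle separating the two apexes, with the crossings alternated between the inside and the outside so that any two of them share at most one vertex (an apex or a cycle vertex) -- so I would call this the same strategy realized on a different graph. Your write-up is actually more complete than the paper's in two respects: you verify the $\mathcal C_1$-condition by an explicit case analysis of $N_{D^\times}(c_i)\cap N_{D^\times}(c_j)$, and you explicitly rule out class $\mathcal C_0$ via Lemma \ref{g2p1c0}, whereas the paper only asserts that a $\mathcal C_1$-drawing exists. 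One trivial slip: in your parenthetical consistency check the bound from Lemma \ref{uff} with $n=2k+2$ is $0{,}6(2k+2)-1{,}2=1{,}2k$, not $1{,}2(2k+2)-1{,}2$; the inequality $k\le 1{,}2k$ still holds, so nothing is affected.
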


\begin{proof}
Let $C=v_1v_2\dots v_{4\ell}v_1$ be a cycle on $4\ell\ge8$ vertices. The plane drawing of this cycle divides the plane into two parts. Add the edges $v_{4k-2}v_{4k}$, $k=1,\dots,\ell$, to the inner part and the edges $v_{4\ell}v_2$,$v_{4k}v_{4k+2}$, $k=1,\dots, \ell-1$, to the outer part. In such a way we obtain a graph $G$ with $4\ell$ vertices and $6\ell$ edges. Moreover, $G+2P_1$ has a $\mathcal C_1$-drawing.
\end{proof}

\begin{lemma}
If $G+P_1$ is of class $\mathcal C_1$, then $|E(G)|\le 2,6 |V(G)|-3,6$.
\end{lemma}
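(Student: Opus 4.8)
The plan is to mimic the derivation of the earlier bounds by passing to the join graph $G+P_1$, applying Theorem \ref{edges}, and solving for $|E(G)|$. Concretely, if $G+P_1$ is of class $\mathcal C_1$, then it is an $n+1$-vertex 1-planar graph of class $\mathcal C_1$ with $n=|V(G)|$, so Theorem \ref{edges} gives $|E(G+P_1)|\le 3{,}6(n+1)-7{,}2 = 3{,}6n-3{,}6$. On the other hand $|E(G+P_1)| = |E(G)| + |V(G)| = |E(G)| + n$, since adding the single vertex of $P_1$ contributes exactly $n$ new edges. Combining the two relations yields $|E(G)| \le 3{,}6n - 3{,}6 - n = 2{,}6n - 3{,}6$, which is precisely the claimed inequality.

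The only subtle point is that applying Theorem \ref{edges} requires $G+P_1$ itself to be of class $\mathcal C_1$, which is exactly the hypothesis of the lemma, so there is nothing further to check: class membership of $G+P_1$ is given, not derived from a drawing of $G$. Thus the argument is essentially a one-line computation, entirely parallel to the preceding lemmas that bound $|E(G)|$ for $G+2P_1$ of class $\mathcal C_1$ and for $G+P_1$ of class $\mathcal C_0$.

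I expect no real obstacle here; the routine care is just in the arithmetic ($3{,}6(n+1)-7{,}2$ simplifies to $3{,}6n-3{,}6$, and subtracting $n$ gives $2{,}6n-3{,}6$). Unlike the earlier $\mathcal C_0$ lemmas, no separate sharpness construction is requested in the statement, so I would not attempt to exhibit an extremal family; if one were wanted, the natural candidate would be a graph built analogously to the extremal graph $T$ from the proof of Theorem \ref{edges}, arranged so that one of its vertices plays the role of the apex, but the statement as given only asks for the upper bound.
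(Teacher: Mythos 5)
Your proof is correct and is essentially identical to the paper's: both apply Theorem \ref{edges} to the $(n+1)$-vertex graph $G+P_1$, use $|E(G+P_1)|=|E(G)|+|V(G)|$, and solve for $|E(G)|$. The arithmetic checks out, and your remark that the class-$\mathcal C_1$ hypothesis on $G+P_1$ is exactly what licenses the application of Theorem \ref{edges} is the right (and only) point to verify.
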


\begin{proof}
From Theorem \ref{edges} we obtain
$|E(G)|+|V(G)|=|E(G+P_1)|\le 3,6|V(G+P_1)|-7,2=3,6|V(G)|-3,6$
which proves the claim.
\end{proof}

\begin{lemma}
There is a graph $G$ with $|E(G)|=2,4|V(G)|-3,8$ such that $G+P_1$ is of class $\mathcal C_1$.  
\end{lemma}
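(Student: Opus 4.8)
The plan is to exhibit an explicit infinite family of graphs $G$, each coming with a $\mathcal C_1$-drawing of $G+P_1$ and satisfying the bookkeeping value $|E(G)|=2,4|V(G)|-3,8$. Following the pattern of the sharpness arguments in Lemma~\ref{huha}, Theorem~\ref{edges} and the lemmas above, I would draw $G$ as a planar (or very nearly planar) ``ladder-type'' configuration all of whose vertices are visible from one face, place the single vertex $v$ of $P_1$ into that face, and join $v$ to every vertex of $G$ so that each spoke is crossed at most once and no edge of $G$ is crossed by two spokes; the resulting drawing of $G+P_1$ is the candidate $\mathcal C_1$-drawing.

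One natural route is to start from a ladder akin to the graph $G_{n-1}$ of the $\mathcal C_0$-sharpness lemma for $G+P_1$ (the two paths $a_1\dots a_k$ and $b_1\dots b_{k-1}$ with the edges $a_ib_i$, $a_{i+1}b_i$, and so on) and to add further chords so that more of the ``rung'' pairs cross, but only on every second rung, so that the crossings lying inside $G$ stay pairwise neighbour-disjoint; then one adds $v$ outside, routing each spoke across at most one edge of $G$ that is far from those internal crossings. The vertex and edge count is then routine: one chooses the number of rungs so that $|V(G)|\equiv 2\pmod 5$ (the only residue for which $2,4|V(G)|-3,8$ is an integer) and so that $|E(G)|$ equals exactly $2,4|V(G)|-3,8$; since the target grows by $12$ whenever $|V(G)|$ grows by $5$, it suffices to display one base graph together with a five-vertex block carrying twelve edges that can be repeated.

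Two things then need checking. First, that the displayed drawing is genuinely a $\mathcal C_1$-drawing: every crossing produced by a spoke has $v$ among its four neighbours, so two such crossings share exactly the vertex $v$; a spoke-crossing and a crossing internal to $G$ share at most one true vertex because the crossed edge of $G$ was chosen away from the internal crossings; and the internal crossings are pairwise neighbour-disjoint by construction. Second, that $G+P_1\notin\mathcal C_0$: by Theorem~\ref{edgesc0}, if $G+P_1$ were of class $\mathcal C_0$ then $|E(G)|=|E(G+P_1)|-|V(G)|\le 3,25(|V(G)|+1)-6-|V(G)|=2,25|V(G)|-2,75$, contradicting $|E(G)|=2,4|V(G)|-3,8$ once $|V(G)|>7$ (so for every admissible size, the smallest being $12$). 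Alternatively, $G$ can be chosen so that it contains $K_{3,3}$ (or $K_{4,2}$), whence $G+P_1$ contains $K_{4,3}$; since $cr(K_{4,3})=2$ while $K_{4,3}$ has only $7$ vertices, $K_{4,3}\notin\mathcal C_0$ by Lemma~\ref{hmco}, and hence neither is its supergraph $G+P_1$.

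The main obstacle is the drawing of $G$ itself. An outerplanar $G$ would make $G+P_1$ planar, hence of class $\mathcal C_0$, so $G$ has to carry enough extra structure --- its own crossings and/or interior vertices --- to bring its edge count up to $2,4|V(G)|-3,8$, and this structure must coexist with the fan of $|V(G)|$ spokes emanating from $v$ without forcing any edge to be crossed twice and without creating two crossings with two common neighbours. I expect the technical heart of the proof to be the design of a constant-size, repeatable 1-planar gadget that tiles into a bona fide $\mathcal C_1$-drawing of $G+P_1$; once such a gadget is fixed, the edge count and the near-independence of the crossings reduce to a short local verification.
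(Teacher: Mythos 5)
The core of this lemma is the existence claim, and that is exactly the part your proposal leaves open: you describe the shape of a construction and explicitly defer ``the design of a constant-size, repeatable 1-planar gadget'' to later, so no graph $G$ with $|E(G)|=2,4|V(G)|-3,8$ and a $\mathcal C_1$-drawing of $G+P_1$ is ever actually produced. Your numerology is right and matches the paper precisely (the paper's witness $G_k$ has $5k+2$ vertices and $12k+1$ edges, i.e.\ $+5$ vertices and $+12$ edges per repeated block, with $|V|\equiv 2\pmod 5$), and your mechanism for attaching $P_1$ is also essentially the paper's: it takes a drawing of $G_k$ in which \emph{all} true vertices of $G_k^\times$ lie on the outer face and places the new vertex there, so the spokes can in fact be drawn crossing-free --- cleaner than your version in which spokes themselves create crossings (where, incidentally, requiring only that no edge of $G$ is crossed by two spokes is not enough: two spoke-crossings on \emph{adjacent} edges of $G$ would share the new vertex and a vertex of $G$, violating the $\mathcal C_1$ condition).

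The missing gadget is not a routine afterthought. Your proposed starting point, the ladder $G_{n-1}$ from the $\mathcal C_0$-sharpness lemma, has edge density $2,25|V|-2,75$, which is exactly the ceiling imposed by Theorem~\ref{edgesc0} on graphs admitting a $\mathcal C_0$-style attachment; pushing up to $2,4|V|-3,8$ requires a genuinely denser block (the paper's $G_1$ in Figure~\ref{c1}, whose glued copies contain vertices of degree up to nine) while simultaneously keeping every crossing pair sharing at most one endvertex \emph{and} keeping every true vertex on the outer face so that $P_1$ can be attached. Reconciling these three constraints is the entire difficulty, and ``add further chords so that more of the rung pairs cross'' does not demonstrate that it can be done. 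On the other hand, your verification that $G+P_1\notin\mathcal C_0$ via the edge bound of Theorem~\ref{edgesc0} is correct and is actually spelled out more carefully than in the paper, which only asserts that $G_k$ is of class $\mathcal C_1$.
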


\begin{proof}
Let $G_1$ be a graph depicted in Figure \ref{c1}. Let $G_k$, $k\ge 2$, be a graph obtained from $G_{k-1}$ and $G_1$ by identifying the edges $v_1v_2$ of $G_{k-1}$ and $u_1u_2$ of $G_1$. The graph $G_k$, $k\ge 2$, has  $3k+1$ vertices of degree three, $k$ vertices of degree six, $k-1$ vertices of degree nine and 2 vertices of degree four. Therefore, it has $12k+1$ edges. On the other hand, this graph has $5k+2$ vertices. Consequently, $|E(G_k)|=2,4|V(G_k)|-3,8$.

The graph $G_k+P_1$ has a $\mathcal C_1$-drawing, since $G_k$ is of class $\mathcal C_1$ and all true vertices of $G_k^\times$ are incident with the outer face.
\end{proof}

\begin{figure}
\centerline{
\begin{tabular}{ccc}
\includegraphics{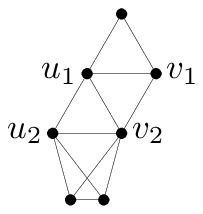}&&
\includegraphics{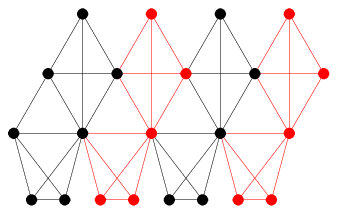}
\\$G_1$&&$G_4$
\end{tabular}
}
\caption{The graphs $G_1$ and $G_4$.}
\label{c1}
\end{figure}

\section{Conclusion}
In this paper we proved that every 1-planar graph is of class $\mathcal C_i$ for some $i\in\{0,1,2\}$. After that we proved that the join $G+H$ is of class $\mathcal C_0$ if and only if the pair $[G,H]$ is subgraph-majorized (that is, both $G$ and $H$ are subgraphs of graphs of the major pair) by one of pairs $[C_3, P_2 \cup P_1], [P_3,P_3]$ and is of class $\mathcal C_1$ if and only if the pair $[G,H]$ is subgraph-majorized by one of pairs $[2P_2\cup C_3], [P_4,P_3]$ in the case when both factors of the graph join have at least three vertices.

In \cite{chm} it was proved that the join $G+H$ is 1-planar if and only if the pair $[G,H]$ is subgraph-majorized by one of pairs $[C_3 \cup C_3,C_3], [C_4,C_4], [C_4,C_3], [K_{2,1,1},P_3]$. Therefore we have full characterization of 1-planar joins in the case when both factors have at least three vertices.

Finally, we proved several necessary conditions for the bigger factor in the case when the smaller one has at most two vertices; two of them improve the results of Zhang \cite{z}.

\end{document}